\newcommand{\R}{{\mathbb{R}}}
\newcommand{\M}{{\mathcal{M}}}
\newcommand{\F}{{\mathcal F}} 
\newcommand{\CC}{{\mathbb C}}
\newcommand{\Q}{{\mathcal Q}}
\newcommand{\RR}{{\mathbb R}}
\newcommand{\NN}{{\mathbb N}}
\newcommand{\p}{\partial} 
\newcommand{\supp}{\operatorname{supp}}
\renewcommand{\Im}{\mathop{\rm Im}\nolimits}
\theoremstyle{plain}
\newtheorem{thm}{Theorem}
\newtheorem{prop}{Proposition}[section]
\newtheorem{lemma}[prop]{Lemma}
\theoremstyle{definition}
\numberwithin{equation}{section}
\def\squarebox#1{\hbox to #1{\hfill\vbox to #1{\vfill}}}
\newcommand{\la}{\langle}
\newcommand{\ra}{\rangle}
\title[Quasilinear Schr\"odinger equations]
{Quasilinear Schr\"odinger equations II: small data and cubic nonlinearities}
\author[J.L. Marzuola]
{Jeremy L. Marzuola}
\author[J. Metcalfe]
{Jason Metcalfe}
\author[D. Tataru]
{Daniel Tataru}
\address{Department of Mathematics, University of North Carolina-Chapel Hill \\
Phillips Hall, Chapel Hill, NC  27599-3250, USA}
\email{marzuola@email.unc.edu}
\address{Department of Mathematics, University of North Carolina-Chapel Hill \\
Phillips Hall, Chapel Hill, NC  27599-3250, USA}
\email{metcalfe@email.unc.edu}
\address{Mathematics Department, University of California \\
Evans Hall, Berkeley, CA 94720, USA}
\email{tataru@math.berkeley.edu}
\begin{document}

\begin{abstract}

  In part I of this project we examined low regularity local
  well-posedness for generic quasilinear Schr\"odinger equations with
  small data.  This improved, in the small data regime, the preceding
  results of Kenig, Ponce, and Vega as well as Kenig, Ponce, Rolvung,
  and Vega.  In the setting of quadratic interactions, the
  (translation invariant) function spaces which were utilized
  incorporated an $l^1$ summability over cubes in order to account for
  Mizohata's integrability condition, which is a necessary condition
  for the $L^2$ well-posedness for the linearized equation.  For cubic
  interactions, this integrability condition meshes better with the
  inherent $L^2$ nature of the Schr\"odinger equation, and such
  summability is not required.  Thus we are able to prove small data 
well-posedness in $H^s$ spaces.

\end{abstract}

\maketitle 

\section{Introduction}
We shall examine local well-posedness for quasilinear Schr\"odinger
equations with cubic interactions and a Cauchy datum in a low
regularity Sobolev space.  In particular, we examine
\begin{equation}
\label{eqn:quasi}
\left\{ \begin{array}{l}
i \partial_t u + g^{jk} (u,\nabla u ) \p_j \p_ku = 
F(u,\nabla u) , \quad u:
\RR \times \RR^d \to \CC^m \\ \\
u(0,x) = u_0 (x)
\end{array} \right.
\end{equation}
where
\[
g : \CC^m \times (\CC^m)^d \to
\RR^{d \times d}, \qquad 
F: \CC^m \times (\CC^m)^d \to \CC^m
\]
are smooth functions which satisfy
\[
g(y,z) = I_d+O(|y|^2+|z|^2), \qquad 
F(y,z)= O(|y|^3+|z|^3) \text{ near } (y,z) = (0,0).  
\]
We also examine
\begin{equation}
\label{eqn:quasi1}
\left\{ \begin{array}{l}
i \partial_t u + \p_j g^{jk} (u)  \p_ku = 
F(u,\nabla u) , \ u:
\RR \times \RR^d \to \CC^m \\ \\
u(0,x) = u_0 (x),
\end{array} \right. 
\end{equation}
where $g$ depends only on $u$.  The latter class of equations can be
obtained by differentiating the first equation.  Indeed, for $u$
solving \eqref{eqn:quasi}, the vector $(u,\nabla u)$ solves an
equation of the latter type.  The latter is written in divergence
form, which follows easily for this class of equations as the terms
obtained when commuting $g$ with the derivative can be absorbed into
$F(u,\nabla u)$.

 The case  of  small initial data and quadratic, rather than
cubic, nonlinear terms, was considered in  \cite{MMT3}. There,
 local well-posedness was established for data
of Sobolev-type regularity $s>\frac{d}{2}+3$ for \eqref{eqn:quasi} and
$s>\frac{d}{2}+2$ for \eqref{eqn:quasi1}.  This represented a
significant improvement in regularity over the previous results
\cite{KPV}, \cite{KPRV1, KPRV2}, though data of arbitrary size was
examined there.  A more complete history of such problems can be found
in \cite{MMT3}, \cite{KPV}, and \cite{LinPon}.

In the quadratic case above, it is insufficient, however, to simply work in Sobolev
spaces.  Indeed, one encounters Mizohata's integrability condition
\cite{Miz1, Miz2, Miz3}, \cite{Ich}, \cite{MMT1} which says that for
the linear equation
\[
(i\partial_t + \Delta_g)v = A_i(x)\partial_i v,
\]
a necessary condition for $L^2$ well-posedness is an integrability of
the real part of $A$ along the Hamiltonian flow of the leading order
operator.  The potentials encountered when linearizing, e.g.,
\eqref{eqn:quasi} with quadratic interactions do not necessarily
satisfy such a condition even with arbitrarily high regularities.  For
this reason, the initial data spaces need to incorporate some decay.
The translation invariant approach of \cite{MMT3} was to require a
summability over cubes, which was inspired by the earlier work
\cite{BT} on semilinear derivative Schr\"odinger equations.

In the case of only cubic and higher order interactions, the scenario
is much simpler, which is what we shall demonstrate here.  When
linearizing \eqref{eqn:quasi} as stated, the potential that is
encountered is $O(|u|^2+|\nabla u|^2)$, for which integrability
follows easily from energy estimates.  As such, the additional $l^1$
summability which was previously required is no longer needed, and the
initial datum can be simply taken to be a member of a Sobolev space.
This notion was previously explored in \cite{KPV2, KPV3}, and we seek
to improve on that by considering rough initial data.

Our main result is precisely that \eqref{eqn:quasi} and
\eqref{eqn:quasi1} are locally well-posed
for $u_0(x)\in H^s$.

\begin{thm}
\label{thm:main1}
a) Let $s > \frac{d+5}2$. Then there exists   $\epsilon > 0$
sufficiently small  such that for all initial data $u_0$ with
\begin{eqnarray*}
\| u_0 \|_{H^s} \leq \epsilon ,
\end{eqnarray*}
the equation \eqref{eqn:quasi} is locally well-posed in $H^s
(\RR^d)$ on the time interval $I = [0,1]$.

b) The same result holds for the equation \eqref{eqn:quasi1}
with $s > \frac{d+3}2 $.
\end{thm}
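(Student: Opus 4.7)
The plan is to adapt the quasilinear iteration scheme from \cite{MMT3} to the present cubic setting, exploiting the fact that the first-order coefficients of the linearization are quadratic in $(u,\nabla u)$ and hence automatically satisfy a Mizohata-type integrability along the Hamiltonian flow via ordinary energy estimates. This should eliminate the $l^1$-summability over cubes and allow working in pure Sobolev spaces, at the mild cost of an extra half-derivative over the thresholds $d/2+2$ and $d/2+1$ that one sees at the level of the coefficients alone.

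First I would set up function spaces built from an $L^\infty_t H^s$ energy norm together with a Kato-type local smoothing norm that gains a half-derivative in $L^2_{t,\text{loc}}$, plus a dual norm for the source side, but with no decoration by cube-summability. For a trial solution $u$ with $\|u\|_X \lesssim \epsilon$, the metric $g^{jk}(u,\nabla u) - I_d$ is then $O(\epsilon^2)$, well within the perturbative regime of the variable-coefficient Schr\"odinger machinery from part I.

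The heart of the argument is a linear estimate for the paradifferential linearization
\[
(i\p_t + \p_j g^{jk}_{<h} \p_k) v + B_j \p_j v + C v = f,
\]
with $B, C = O(|u|^2+|\nabla u|^2)$. Using a Doi-type positive commutator with a symbol of the form $\la \xi \ra^{2s} \varphi(x,\xi/|\xi|)$ adapted to the Hamiltonian flow of $g$, one obtains the energy and local smoothing bound
\[
\|v\|_{L^\infty_t H^s}^2 + \|v\|_{L^2_t H^{s+1/2}_{\text{loc}}}^2 \lesssim \|v(0)\|_{H^s}^2 + \text{(source)},
\]
where the error generated by $B_j\p_j v$ is controlled by $\int (\|u\|_{L^\infty}^2 + \|\nabla u\|_{L^\infty}^2)\, \|v\|_{H^s}^2\, dt$ and absorbed via Gronwall, precisely because this quantity is finite for $u \in L^\infty_t H^s$ with $s > d/2+2$. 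With this in hand, I would run a contraction-type iteration: solve the linearized equation at each step, propagate the smallness $\|u_n\|_X \lesssim \epsilon$ using that $F(u_n,\nabla u_n)$ and the quasilinear error are both $O(\epsilon^3)$, and establish convergence of $u_{n+1}-u_n$ at one level of regularity below, say in an $L^\infty_t L^2$-type norm, where the quasilinear top-order term must be split paradifferentially. Continuous dependence follows by frequency-envelope or Bona--Smith type regularization. For the divergence-form system \eqref{eqn:quasi1} the argument is formally the same but one saves a derivative because the coefficient now depends only on $u$, not on $\nabla u$, lowering the threshold to $s>(d+3)/2$.

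The main obstacle I expect is calibrating the regularity so that three things close simultaneously: (i) the cubic source $F(u,\nabla u)$ fits into the dual of the local smoothing norm without forcing extra regularity on $u$; (ii) the paradifferential remainder after conjugation by the Doi multiplier is controlled purely by $L^\infty_t W^{1,\infty}_x$ norms of $u$; and (iii) the difference estimate at lower regularity still controls the top-order coefficient difference $g^{jk}(u_n,\nabla u_n) - g^{jk}(u_{n-1},\nabla u_{n-1})$, which is where the gap between $d/2+2$ and $(d+5)/2$ is spent. All of these should follow from refinements of the corresponding estimates in \cite{MMT3}, with the genuine simplification being that the quadratic-in-$u$ nature of the perturbative coefficients removes the need to track $l^1$-over-cubes norms at any step.
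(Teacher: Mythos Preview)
Your overall strategy---iterate the paradifferential linear problem, close via local smoothing, prove convergence one derivative down, and upgrade continuity via frequency envelopes---is correct and matches the paper's. But there is one genuine gap, and two organizational differences worth noting.

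\textbf{The gap.} The assertion that the first-order term $B_j\partial_j v$ is ``controlled by $\int(\|u\|_{L^\infty}^2+\|\nabla u\|_{L^\infty}^2)\|v\|_{H^s}^2\,dt$ and absorbed via Gronwall'' is false. After commuting $\langle D\rangle^s$ through, the contribution of $B\cdot\nabla v$ to the $H^s$ energy identity is essentially $\int \Re B\cdot \Im(\bar w\,\nabla w)$ with $w=\langle D\rangle^s v$; this carries one derivative too many on $v$ and cannot be bounded by $\|B\|_{L^\infty}\|v\|_{H^s}^2$. That is precisely the Mizohata obstruction---if Gronwall sufficed here it would equally suffice in the quadratic case, and the $l^1$ summability of \cite{MMT3} would have been unnecessary there. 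What the cubic structure actually buys is that $B=O(u^2)$ contains \emph{two} factors of the solution, each enjoying a half-derivative local-smoothing gain; together with the half-derivative gain available on the output side, this lets one place $B\cdot\nabla v$ into the \emph{dual} local-smoothing space. The paper does exactly this: the term $V\nabla u$ is treated as a source and estimated in $l^2 Y^\sigma$ via a trilinear bound of the schematic form $X\times X\times X\to Y$ (Proposition~\ref{p:lin1} combined with \eqref{tri}), not via an $L^\infty$ bound on the coefficient and Gronwall. Without this trilinear mechanism the linear estimate does not close, and it is also where the threshold $s>(d+3)/2$ is actually spent.

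\textbf{Differences.} First, the paper proves (b) and deduces (a) from it: if $u$ solves \eqref{eqn:quasi} then $(u,\nabla u)$ solves a system of type \eqref{eqn:quasi1}, so (b) at regularity $s-1>(d+3)/2$ applied to the pair yields (a) at $s>(d+5)/2$. Your plan runs the two cases in parallel with a separate derivative count; the reduction is cleaner. Second, the solution spaces are not undecorated: the paper retains an $l^2$ summability over dyadic cubes in its $l^2 X^s$ norm (the simplification relative to \cite{MMT3} is $l^1\to l^2$, not $l^1\to$ nothing). This $l^2$ structure is cheap---it is recovered at the end of the linear estimate by a cutoff-and-sum argument---but it is part of what makes the trilinear $X\times X\times X\to Y$ estimate close at the stated regularity.
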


In the theorem, well-posedness is taken to include the existence
of a local solution, uniqueness, and continuous dependence on the
initial datum.

The above theorem also holds for ultrahyperbolic operators, as in
\cite{KPRV1, KPRV2}, where $g(0)$ is of a different signature.  Our
method of proof of the local smoothing estimates uses a wedge
decomposition, and the estimates are proved in each coordinate
direction separately.  Thus, trivial adjustments of the sign of the
multiplier in the corresponding directions of opposite signature will
permit said results.

This article is organized as follows.  In Section 2, we set our
notations and describe the function spaces in which we shall work.
Section 3 is devoted to proving the required nonlinear estimates.
Section 4 contains the proof of our main linear estimate, which is a
variant of the classical local smoothing estimate for the
Schr\"odinger equation.  In Section 5  we prove Theorem
\ref{thm:main1}.

{\sc Acknowledgments.} {
The second author was supported in part by NSF grant DMS-1054289.  
The third author was supported in part by  NSF grant DMS-0801261
as well as by the Miller Foundation.}


\section{Function Spaces and Notations}
\label{sec:boot}

The function spaces and notations presented in this section are the
same as those established in \cite{MMT3}.  We repeat them for easy
reference.

For a function $u(t,x)$ or $u(x)$, we let $\hat{u}=\mathcal{F}u$ denote the
Fourier transform in the spatial variables $x$.  A function $u$ is
said to be localized at frequency $2^i$ if $\hat{u}(t,\xi)$ is
supported in $\R\times [2^{i-1},2^{i+1}]$.  We shall use a
Littlewood-Paley decomposition of the spatial frequencies,
\[\sum_{i=0}^\infty S_i(D)=1,\]
where $S_i$ localizes to frequency $2^i$ for $i>0$ and to frequencies
$|\xi|\le 2$ for $i=0$.  Note that we are working in
nonhomogeneous spaces.  We set
\[u_i = S_i u, \quad S_{\le N}u=\sum_{i=0}^N S_i u,\quad S_{\ge N}u = \sum_{i=N}^\infty
S_i u.\]

For each $j\in \NN$, we let $\Q_j$ denote a partition of $\R^d$ into
cubes of side length $2^j$, and we let $\{\chi_Q\}$ denote an
associated partition of unity.  For a translation invariant
Sobolev-type space $U$, we set $l^p_j U$ to be the Banach space with
associated norm
\[\|u\|_{l^p_j U}^p = \sum_{Q\in \Q_j} \|\chi_Q u\|_U^p\]
with the obvious modification for $p=\infty$.

Within such norms, the smooth partition of unity with compactly
supported functions $\chi_Q$ can be replaced by cutoffs which are
frequency localized as the associated Schwartz tails decay rapidly
away from $Q$.  Thus, the components can be taken to retain the
frequency localization of the function $u$.

Motived by the well-known local smoothing estimates, we define
\[
  \| u \|_{X}  =   \sup_{l} \sup_{Q \in \mathcal{Q}_l} 
2^{-\frac{l}{2}} \| u \|_{L^2L^2 ([0,1] \times Q)}.
\]
Here and throughout, $L^p L^q$ represents $L^p_t L^q_x$.

To measure the forcing terms, we use an atomic space $Y$ satisfying $X=Y^*$, see
\cite{MMT3}.  
A function $a$ is an atom in $Y$ if there is
a $j\ge 0$ and a $Q\in \Q_j$ so that $a$ is supported in $[0,1]\times
Q$ and
\[
\|a\|_{L^2([0,1]\times Q)}\lesssim 2^{-\frac{j}{2}}.
\]
Then we define $Y$ as linear combinations of the form
\[f=\sum_k c_k a_k,\quad \sum |c_k|<\infty,\quad a_k \text{ atoms}\]
with norm
\[
\| f \|_{Y} = \inf \{\sum |c_k|\,:\, f=\sum_k c_k a_k,\, a_k \text{ atoms} \}.
\]

For solutions which are localized to frequency $2^j$, we shall
encorporate the typical half derivative smoothing by working within
\[
X_j = 2^{-\frac{j}{2}}X\cap L^\infty L^2
\]
with  norm
\[
\|u\|_{X_j} = 2^{\frac{j}{2}}\|u\|_X + \|u\|_{L^\infty L^2}.
\]
We incorporate the Sobolev regularity and the cube summability by defining
\[
\|u\|^2_{l^p X^s} = \sum_{j\ge 0} 2^{2js} \|S_j u\|^2_{l^p_j X_j}.
\]
For the purposes of this paper, we will be working primarily in
$l^2 X^s$, as opposed to our previous paper \cite{MMT3} where we
had to work in $l^1 X^s$ due to incompatibilities with the Mizohata
condition.  On occasion we will also use the slightly larger space
\[
\|u\|^2_{X^s} = \sum_j 2^{2js} \|S_j u\|^2_{X_j}.
\]

We analogously define
\[
Y_j = 2^{\frac{j}{2}}Y+L^1L^2
\]
which has norm
\[\|f\|_{Y_j} = \inf_{f=2^{\frac{j}{2}}f_1+f_2} \|f_1\|_Y +
\|f_2\|_{L^1L^2}\]
and
\[\|f\|^2_{l^pY^s} = \sum_j 2^{2js} \|S_j f\|^2_{l^p_jY_j}.\]
Here, we shall be working primarily within $l^2Y^s$, though we note
that the Minkowski integral inequality gives 
\[\|f\|_{l^2Y^s} \lesssim \|f\|_{Y^s}\]
where
\[
\|f\|_{Y^s}^2 = \sum_j 2^{2js} \|S_j f\|^2_{Y_j}.
\]
Hence, the cube summation will largely be ignored for the forcing terms.

We also note that for any $j$, we have 
\[
 \sup_{Q \in \mathcal{Q}_j} 
2^{-\frac{j}{2}} \| u \|_{L^2L^2 ([0,1] \times Q)} \leq \| u \|_X,
\] 
hence
\begin{eqnarray}
\label{eqn:Ybound}
\| u \|_Y \lesssim 2^{\frac{j}{2}} \| u \|_{l^1_j L^2 L^2 } .
\end{eqnarray}
This bound will come in handy at several places later on.

In our multilinear estimates, we shall use frequency envelopes.  Consider a
Sobolev-type space $U$ for which we have
\[
\|u\|_U^2 \sim \sum_{k=0}^\infty \|S_k u\|_U^2.
\]
A frequency envelope for a function $u \in U$ is a positive $l^2$ sequence, $\{a_j\}$,
with 
\[
\|S_j u\|_U \le a_j \|u\|_U.
\]
We shall only permit slowly varying frequency envelopes.  Thus, we
require $a_0\approx 1$ and 
\begin{equation}\label{slowlyvarying}
a_j \le 2^{\delta |j-k|} a_k,\quad j,k\ge 0,\quad 0<\delta\ll 1.
\end{equation}
The constant $\delta$ shall be chosen later and only depends on $s$
and the dimension $d$.  Such frequency envelopes always exist.  E.g.,
one may choose
\begin{equation}\label{env}a_j = 2^{-\delta j} + \|u\|_U^{-1}\sup_k 2^{-\delta|j-k|} \|S_k
u\|_U.\end{equation}


\section{Multilinear and nonlinear estimates}
\label{sec:mult}

This section contains our main multilinear estimates.  The first
proposition is essentially from \cite{MMT1}, though it is somewhat
simpler as the summability over cubes is easier.

\begin{prop}
\label{fe:nonlinear}
Let $\sigma > \frac{d}2$, and $u,v \in l^2 X^\sigma$ with admissible
frequency envelopes
$a_k$, respectively $b_k$. Then the $l^2 X^\sigma$ spaces satisfy the
algebra type property
  \begin{equation}\label{fe:u_squared}
\| S_k (u v)\|_{l^2 X^\sigma} \lesssim (a_k+b_k)\|u\|_{l^2 X^\sigma} \|v\|_{l^2 X^\sigma}, 
  \end{equation}
as well as the Moser type estimate
  \begin{equation}\label{fe:moser}
\|S_k F(u) \|_{l^2 X^\sigma} \lesssim a_k \|u\|_{l^2
  X^\sigma}(1+\|u\|_{l^2 X^\sigma}) c(\|u\|_{L^\infty}). 
  \end{equation}
for all smooth $F$ with $F(0)= 0$.
\end{prop}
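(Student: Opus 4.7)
The plan is to apply a Littlewood--Paley decomposition together with a Bony paraproduct split. With $\sigma > \frac{d}{2}$, Bernstein combined with the embedding $X_j \hookrightarrow L^\infty L^2$ yields the Sobolev-type embedding $l^2 X^\sigma \hookrightarrow L^\infty L^\infty$, which allows us to place one factor pointwise while the other retains its cube-localized $X$ control.

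For the algebra estimate \eqref{fe:u_squared}, decompose $uv$ into its three Bony paraproducts (low-high, high-low, and high-high) and apply $S_k$. In the low-high piece, schematically $S_k(S_{<k-C} u \cdot S_k v)$, H\"older on a cube $Q \in \Q_k$ gives
\[
\|\chi_Q S_k(uv)\|_{X_k} \lesssim \|S_{<k-C} u\|_{L^\infty L^\infty} \, \|\chi_Q S_k v\|_{X_k}.
\]
Bernstein together with the envelope bound $\|S_j u\|_{X_j} \lesssim a_j 2^{-j\sigma}\|u\|_{l^2 X^\sigma}$ produces
\[
\|S_{<k-C} u\|_{L^\infty L^\infty} \lesssim \sum_{j<k} 2^{j(d/2 - \sigma)} a_j \|u\|_{l^2 X^\sigma} \lesssim a_k \|u\|_{l^2 X^\sigma},
\]
where the last step uses the slow variation \eqref{slowlyvarying} with $\delta < \sigma - \frac{d}{2}$. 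Summing in $Q$ via $\|S_k v\|_{l^2_k X_k}$ then produces an $a_k\, b_k$-type bound; the high-low piece is symmetric.

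The high-high piece, schematically $\sum_{j \ge k} S_k(S_j u \cdot S_j v)$, is the main obstacle: the natural cube scale $2^j$ of each factor is \emph{finer} than the output scale $2^k$, so one first subdivides each $Q \in \Q_k$ into sub-cubes $Q' \in \Q_j$ and applies Cauchy--Schwarz in $Q'$ before taking the outer $l^2_k$ cube sum. This converts an $L^\infty L^\infty \times L^2 L^2$ bound at the finer scale into the required cube-summable bound at scale $2^k$; each term contributes a factor of $2^{j(d/2 - \sigma)} a_j b_j$, which is summable in $j \ge k$ because $\sigma > \frac{d}{2}$ and collapses to $(a_k + b_k)$ via \eqref{slowlyvarying}.

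For the Moser estimate \eqref{fe:moser}, use the telescoping identity
\[
F(u) = \sum_{l \ge 0} m_l \cdot S_{l+1} u, \qquad m_l = \int_0^1 F'\bigl(S_{\le l} u + t\, S_{l+1} u\bigr)\, dt.
\]
Smoothness of $F$ together with the $L^\infty$ norm of $u$ supplies the factor $c(\|u\|_{L^\infty})$, while iterating \eqref{fe:u_squared} on $m_l$ (a composition of $F'$ with a polynomial in $u$'s Littlewood--Paley pieces) bounds it in the appropriate space by $1 + \|u\|_{l^2 X^\sigma}$; reassembling with the frequency sum then yields the claim. Once the high-high sub-cube bookkeeping above is in hand, everything else is routine paraproduct and frequency-envelope accounting.
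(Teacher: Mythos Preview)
Your treatment of \eqref{fe:u_squared} is close in spirit to the paper's (which also uses Bernstein plus cube counting), but two points are off. First, in the high--high piece you have the cube scales reversed: for $j \ge k$ the input scale $2^j$ is \emph{coarser} than the output scale $2^k$, so one subdivides each $Q' \in \Q_j$ into $\approx 2^{d(j-k)}$ sub-cubes $Q \in \Q_k$ (not the other way around), paying a factor $2^{d(j-k)/2}$ in the $l^2_k$ sum. Second, the claimed bound $\sum_{j<k} 2^{j(d/2-\sigma)} a_j \lesssim a_k$ in the low--high piece is false, since the $j=0$ term already contributes $a_0 \approx 1$; the sum is merely $O(1)$, and the frequency-envelope factor in that piece must come from the high-frequency factor $S_k v$, i.e.\ it is $b_k$, not $a_k$. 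Neither slip is fatal once corrected.

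The Moser estimate \eqref{fe:moser}, however, has a genuine gap. You propose to bound $m_l = \int_0^1 F'(S_{\le l}u + t\,S_{l+1}u)\,dt$ by ``iterating \eqref{fe:u_squared}'', but $F'$ is a general smooth function, not a polynomial, so iterating the bilinear estimate cannot control $\|m_l\|_{l^2 X^\sigma}$ without already invoking the Moser bound you are proving; the argument is circular. The missing ingredient is the regime $l < k - C$: there $S_{l+1}u$ lives at frequency $\ll 2^k$, so for $S_k(m_l\, S_{l+1}u)$ to be nonzero $m_l$ must contribute frequency $\approx 2^k$. Since $m_l$ is built entirely from $u_{\le l+1}$, the chain rule yields $\|\tilde S_k m_l\|_{L^\infty} \lesssim 2^{-N(k-l)}\, c(\|u\|_{L^\infty})$ for every $N$, and this rapid off-diagonal decay is exactly what makes the sum over $l < k$ converge with the correct envelope $a_k$. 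This chain-rule step is the heart of the paper's proof of \eqref{fe:moser} and is absent from your sketch.
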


\begin{proof}
To prove \eqref{fe:u_squared}, we consider $\|S_k(S_i u S_j
v)\|_{l^2_k X_k}$.  We must have
either $i\ge k-4$ or $j\ge k-4$; by symmetry we shall assume the former.  Since $i\ge
k-4$, we have $X_i\subset X_k$.  And we naively use the fact that there are
$\approx 2^{d(i-k)}$ cubes of sidelength $2^k$ contained in one of
sidelength $2^i$.   Thus, by Bernstein's inequality, we
have
\begin{align}
\label{eqn:hhl}
\|S_k(S_iu S_j v)\|_{l^2_k X_k} &\lesssim 2^{\frac{d}{2}(i-k)}\|S_i
u\|_{l^2_i X_i} \|S_j v\|_{L^\infty}\\
&\lesssim 2^{\frac{d}{2}(i-k)} 2^{j\frac{d}{2}} \|S_i u\|_{l^2_i X_i} \|S_j
v\|_{l^2_j L^\infty L^2}. \notag
\end{align} 
It follows that 
\[\|S_k(S_i u S_j v)\|_{l^2 X^\sigma} \lesssim a_i b_j
2^{i(\frac{d}{2}-\sigma)}2^{j(\frac{d}{2}-\sigma)} 2^{k(\sigma-\frac{d}{2})}
\|u\|_{l^2 X^\sigma}\|v\|_{l^2 X^\sigma}.\]
As $\sigma>\frac{d}{2}$, the estimate \eqref{fe:u_squared} now follows after summation in $i, j$.
When $k-4\le i\le k+4$, we simply use that the Cauchy-Schwarz inequality
implies that $\sum_j 2^{j(\frac{d}{2}-\sigma)}b_j$ is $O(1)$.  When
$i>k+4$, we also use that
$\sum_{i>k} 2^{i(\frac{d}{2}-\sigma)} a_i \lesssim
2^{k(\frac{d}{2}-\sigma)}a_k$
provided the $\delta$ in \eqref{slowlyvarying} satisfies $0<\delta<\sigma-\frac{d}{2}$.


We now examine \eqref{fe:moser} and replace the discrete
Littlewood-Paley decomposition by a continuous one
\[Id = S_0 + \int_0^\infty S_k \,dk.\]
Abusing notation, we will set $u_0=S_0 u$ for now.  Then,
using the Fundamental Theorem of Calculus, we can write
\begin{equation}
  \label{ftc}
  S_kF(u) = S_kF(u_0) + \int_0^\infty S_k(u_{k_1}F'(u_{<k_1}))\,dk_1.
\end{equation}

For the first term of \eqref{ftc}, we note that
\[\|\partial^\alpha F(u_0)\|_{l^2_0 X_0}\lesssim \|u_0\|_{l^2_0 X_0} c(\|u_0\|_{L^\infty}).\]
Thus,
\[\|S_k F(u_0)\|_{l^2_k X_k}\lesssim 2^{k/2} \|S_kF(u_0)\|_{l^2_0 X_0} \lesssim
2^{-Nk} \|u_0\|_{l^2_0 X_0}c(\|u_0\|_{L^\infty})\]
for any $N$, from which the $l^2 X^\sigma$ bound follows easily.

We split the integral in the second term of \eqref{ftc} into two
regions.  For $k_1\ge k-4$, we use that $X_{k_1}\subset X_k$ and have
\begin{multline*}2^{k\sigma}\Bigl\|\int_{k-4}^\infty
S_k(u_{k_1}F'(u_{<k_1}))\,dk_1\Bigr\|_{l^2_k X_k} \\\lesssim \int_{k-4}^\infty
2^{-(k_1-k)(\sigma-\frac{d}{2})} \|u_{k_1}\|_{l^2 X^\sigma}
\|F'(u_{<k_1})\|_{L^\infty}\,dk_1\end{multline*}
by using \eqref{eqn:hhl}.  
From this, the desired bound follows if $\delta$ in
\eqref{slowlyvarying} is chosen small enough.  This case is significantly
simpler here than its analogous case in \cite{MMT3}  because we
are now dealing with $l^2$ sums rather than $l^1$.

For $k_1<k-4$, we note that
\[S_k(u_{k_1}F'(u_{<k_1}))=S_k(u_{k_1}\tilde{S}_kF'(u_{<k_1}))\]
where $\tilde{S}_k$ also localizes to frequency $2^k$ and $S_k
\tilde{S_k}=S_k$.  The chain rule allows us to estimate
\[\|\tilde{S}_k F'(u_{<k_1})\|_{L^\infty} \lesssim 2^{-N(k-k_1)}
c(\|u_{<k_1}\|_{L^\infty}).\]
Thus,
\begin{align*}\|S_k(u_{k_1}F'(u_{<k_1}))\|_{l^2_k X_k} &\lesssim 2^{\frac{k-k_1}{2}}
\|u_{k_1}\|_{l^2_{k_1} X_{k_1}} \|\tilde{S}_k F'(u_{<k_1})\|_{L^\infty}\\
&\lesssim 2^{-N(k-k_1)} \|u_{k_1}\|_{l^2_{k_1} X_{k_1}} c(\|u_{<k_1}\|_{L^\infty}).
\end{align*}
In turn, this leads to
\[\|S_k(u_{k_1}F'(u_{<k_1}))\|_{l^2 X^\sigma} \lesssim 2^{-N(k-k_1)} a_{k_1}
\|u\|_{l^2 X^\sigma} c(\|u\|_{L^\infty}),\]
where as $N$ was arbitrary, we allowed its value to change from line
to line,
and after integration, \eqref{fe:moser} is proved.
\end{proof}






Replacing the bilinear estimates which were used in \cite{MMT3} are
the following trilinear estimates.  In particular, we note that here
we only require $s>\frac{d+3}{2}$, which accounts for the improvement
in regularity as compared to the quadratic interactions explored in \cite{MMT3}.

\begin{prop}
Let $s>\frac{d+3}{2}$, and suppose that $u\in l^2X^{\sigma-1}$, $v, w\in
l^2 X^{s-1}$ with frequency envelopes $\{a_k\}$, $\{b_k\}$, and
$\{c_k\}$ respectively.  Then for $0\le \sigma\le s$, we have
\begin{equation}\label{tri}
\| S_k(u v w)\|_{l^2 Y^\sigma} \lesssim (a_k+b_k+c_k) \|
u\|_{X^{\sigma-1}} \|v\|_{l^2 X^{s-1}} \|w\|_{l^2
  X^{s-1}}.
\end{equation}
If $0\le \sigma\le s-1$ and if $u\in l^2 X^\sigma$, $v\in l^2X^{s-2}$
and $w\in l^2X^{s-1}$, then
\begin{equation}\label{tri2}
\| S_k(u v w)\|_{l^2 Y^\sigma} \lesssim (a_k+b_k+c_k) \|
u\|_{l^2 X^{\sigma}} \|v\|_{l^2 X^{s-2}} \|w\|_{l^2
  X^{s-1}}.
\end{equation}
Finally, for $0\le \sigma\le s$ and $u\in l^2X^{\sigma-2}$ and $v,w\in l^2
X^s$, and
\begin{equation}\label{tri3}
\| S_k(u S_{\ge k-4}(v w))\|_{l^2 Y^\sigma} \lesssim (a_k+b_k+c_k) \|
u\|_{l^2 X^{\sigma-2}} \|v\|_{l^2 X^{s}} \|w\|_{l^2
  X^{s}}.
\end{equation}
\end{prop}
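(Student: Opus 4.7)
The plan is to prove all three estimates by a standard Littlewood-Paley decomposition on $u$, $v$, $w$, followed by a case analysis on the relative sizes of the three input frequencies. Writing $u=\sum_i u_i$, $v=\sum_j v_j$, $w=\sum_l w_l$ with $u_i = S_i u$, etc., I would bound each piece $\|S_k(u_i v_j w_l)\|_{l^2_k Y_k}$ and then assemble the full $l^2 Y^\sigma$ norm. Fourier support considerations restrict attention to configurations in which at least two of the three input frequencies are comparable to or exceed $k$, giving the usual case distinction between high-low-low, high-high-low, and high-high-high interactions.

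The main technical tool is the inclusion $\|f\|_{Y_k} \lesssim \|f\|_{l^1_k L^2 L^2}$, which follows from $Y_k \supset 2^{k/2} Y$ together with \eqref{eqn:Ybound}. Localized on a cube $Q \in \Q_k$, H\"older's inequality distributes the three factors between $L^2 L^2(Q)$ and $L^\infty L^\infty$, and Bernstein's inequality $\|f_m\|_{L^\infty L^\infty} \lesssim 2^{md/2} \|f_m\|_{L^\infty L^2}$ converts each $L^\infty L^\infty$ factor into the $L^\infty L^2$ component of the $X_m$ norm at cost $2^{md/2}$. Passing to the global $l^2_k$ summation requires the $L^2 L^2$-factor to carry its own $l^2$ cube summability, which is controlled by its $l^2_j X_j$ norm whenever $k \leq j$; this is what dictates how the three factors are distributed across the three estimates.

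For \eqref{tri}, since $u \in X^{\sigma-1}$ is only controlled in a sup sense over cubes, $u_i$ must be placed in $L^\infty L^\infty$ via Bernstein against $\|u_i\|_{X_i}$. One of $v_j$ or $w_l$ then carries the $l^2_k L^2 L^2$ norm, and the third factor goes into $L^\infty L^\infty$. Substituting the envelope bounds $\|u_i\|_{X_i} \leq a_i 2^{-i(\sigma-1)} \|u\|_{X^{\sigma-1}}$ and the analogous ones for $v,w$, and using slow variation \eqref{slowlyvarying} to transport $a_i, b_j, c_l$ to their values at the output frequency $k$, yields \eqref{tri}. Estimate \eqref{tri2} is handled by the same method, but the improved regularity $u \in l^2 X^\sigma$ allows $u_i$ to carry the $l^2_k L^2 L^2$ norm directly, which in turn affords the two-derivative loss on $v$. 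For \eqref{tri3}, the restriction $S_{\geq k-4}(vw)$ forces $\max(j,l) \gtrsim k$ in every nonvanishing contribution; this confinement to high-frequency $vw$ interactions produces the two extra derivatives on $v,w$ needed to offset the loss of two derivatives on $u \in l^2 X^{\sigma-2}$.

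The main obstacle is the exponent bookkeeping: verifying in each frequency configuration that the dyadic sums converge geometrically and reassemble into the envelope form $(a_k+b_k+c_k)$. The threshold $s > (d+3)/2$ is essentially sharp here, arising from the requirement that $2^{md/2} \cdot 2^{-m(s-1)}$, summed against a slowly varying envelope, produce a geometric series in $m$; this forces $s - 1 - d/2 > 0$ together with an additional half derivative absorbed by the $l^2$ rather than $l^1$ cube summability, matching the improvement over the quadratic-nonlinearity case in \cite{MMT3}.
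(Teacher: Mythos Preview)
Your outline has the right architecture (Littlewood--Paley trilinear decomposition, case analysis, frequency envelope bookkeeping), but the distribution of factors you describe in the key case is backwards, and the argument will not close as written.

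In the high--low--low case $|i-k|\le 4$, $j,l\le k+4$, you place the high-frequency factor $u_i$ in $L^\infty L^\infty$ via Bernstein, at cost $2^{id/2}\approx 2^{kd/2}$, and put one of $v_j,w_l$ in $L^2L^2$. With $j,l$ bounded, this leaves an uncompensated factor of order $2^{k(1+d/2)}$ after multiplying by $2^{k\sigma}$ and dividing by $2^{i(\sigma-1)}$; the dyadic sum diverges. Your stated rationale---that $u\in X^{\sigma-1}$ has only sup-over-cubes control and therefore ``must'' go into $L^\infty$---is the mistake. The $X$ component of $\|u_i\|_{X_i}$ is exactly a sup-over-cubes local $L^2L^2$ bound, and it is precisely what should carry $u_i$: one uses $\|u_i\|_{L^2L^2(Q)}\le 2^{l/2}\|u_i\|_X\le 2^{l/2}2^{-i/2}\|u_i\|_{X_i}$ on cubes $Q$ of the \emph{lowest} scale $2^l$, gaining a half-derivative at frequency $i$ rather than losing $d/2$. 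The low-frequency factors $v_j,w_l$ then go into $L^\infty$ via Bernstein, and the $Y$ embedding \eqref{eqn:Ybound} is applied at the intermediate scale $j$ (not $k$), producing the coefficient $c_{kijl}\lesssim 2^{-k/2}2^{-i/2}2^{j(d+1)/2}2^{l(d+1)/2}$; this is what makes $s>(d+3)/2$ the correct threshold.

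You also do not indicate how the high--high case ($i,j>k-4$, $|i-j|<4$) is handled. The paper does not argue it directly: it pairs against a test function $z_k$ and reinterprets $\int \bar z_k u_i v_j w_l$ via the duality $l^2_j X_j=(l^2_j Y_j)^*$ as a high--low--low estimate with the roles of $k$ and $j$ swapped, recycling the Case~A bound. Some device of this kind is needed, since a direct H\"older/Bernstein count in this regime runs into the same sort of loss.
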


\begin{proof}
 For the proof  we need to establish trilinear  estimates of the form 
\begin{equation}
\label{tri1}\|S_k(u_i v_j w_l)\|_{l^2_k Y_k} \lesssim c_{kijl}   \|u_i\|_{X_i}
\|v_j\|_{l^2_j X_j} \|w_l\|_{l^2_l X_l}.
\end{equation}
for several different cases of frequency balances.

{\bf Case A.  $|i-k|\le 4$ and $j,l\le k+4$. }   We
shall assume, without loss, that $j\ge l$.  Here, we measure the
output in $Y$, use $X$ for the highest frequency factor, and $L^\infty
L^2$ for the lower frequency factors.  Using \eqref{eqn:Ybound} we
have 
\[
LHS\eqref{tri1}
\lesssim 2^{-k/2} \|S_k(u_i v_j w_l)\|_Y \lesssim 2^{-k/2}2^{j/2}
\|S_k(u_i v_j w_l)\|_{l^1_j L^2L^2}.
\] 
By Bernstein's inequality and using that $l<j$, this yields 
\begin{align*}LHS \eqref{tri1} \ &\lesssim 2^{-k/2} 2^{j/2}  2^{jd/2}
\|v_j\|_{l^2_j L^\infty L^2} \|u_i w_l\|_{l^2_l L^2 L^2}\\
&\lesssim 2^{-k/2} 2^{j\frac{d+1}{2}} 2^{l\frac{d+1}{2}} \|u_i\|_{X}
\|v_j\|_{l^2_l L^\infty L^2} \|w_l\|_{l^2_l L^\infty L^2},
\end{align*}
which in turn gives
\begin{equation}\label{tri1a}
 c_{kijl} \lesssim 2^{-k/2} 2^{-i/2} 2^{l\frac{d+1}{2}} 2^{j\frac{d+1}{2}}.
\end{equation}

To prove \eqref{tri} for this type of interactions
we use \eqref{tri1a} and the frequency envelopes of $u$, $v$, and $w$
to bound 
\begin{multline*}
 \|S_k(u_i v_j w_l)\|_{l^2 Y^\sigma} 
\\\lesssim  a_i 2^{j\Bigl(\frac{d+3}{2}-s\Bigr)}  b_j
2^{l\Bigl(\frac{d+3}{2}-s\Bigr)}  
c_l    \|u\|_{X^{\sigma-1}} \|v\|_{l^2 X^{s-1}} \|w\|_{l^2 X^{s-1}},
\end{multline*}
which is easily summed over $i,j,l$ in the appropriate ranges. 
The case where the same frequency balance holds but with
the roles of $i$, $j$ and $l$ interchanged is similar but simpler.

 Now consider \eqref{tri2}. The worst case is $i \geq j \geq l$ if 
$\sigma \leq s-2$, respectively  $j \geq i \geq l$ if 
$s-2 < \sigma \leq s-1$. In both cases \eqref{tri2} is weaker than \eqref{tri}.

Finally, for the estimate \eqref{tri3} we must have either $j > k-8$ or $l > k-8$,
in which case \eqref{tri3} is also weaker than \eqref{tri}.

\medskip
{\bf Case B. $i,j > k-4$, $|i-j|<4$, $l<i+8$.}
Grouping terms in the following manner
\[\Bigl|\int \overline{z_k} u_i v_j w_l \, dx\,dt\Bigr| 
\lesssim  \|v_j\|_{l^2_j X_j} \|z_k u_i w_l\|_{l^2_j Y_j}\]
and
in view of the duality relation $l^2_j X_j = l^2_j Y_j^*$, from the 
bound \eqref{tri1a} we directly obtain
\begin{equation}\label{tri1b}
c_{ijkl}  \lesssim 2^{-\frac{i}2} 2^{-\frac{j}2} 2^{\frac{d+1}{2}k} 2^{\frac{d+1}{2}l}.
\end{equation}
Then the  estimates \eqref{tri}, \eqref{tri2} and \eqref{tri3}  follow again  by passing
to frequency envelopes and summing over the appropriate ranges of
$i,j,l$.  Here, again, we have handled explicitly the worst case, and
those cases where the roles of $i,j,l$ are interchanged are simpler.

We remark that in the case of \eqref{tri3} we need to deal with the additional
multiplier $S_{>k-4}$. However, all our function spaces are translation invariant,
and the kernel of $S_{>k-4}$ is a bounded measure. Thus it can easily be disposed of.
\end{proof}

We shall also utilize the following bound on commutators when, e.g.,
we decompose into functions whose Fourier transforms are supported in
wedges about the coordinate axes.  The proof closely resembles that of
\cite{MMT3}, though care is taken to permit $s>\frac{d+3}{2}$ rather
than $s>\frac{d}{2}+2$.
\begin{prop}\label{propComm} We assume
\[g^{kl}-\delta^{kl} = h^{kl}(w(t,x))\]
where $h(z)=O(|z|^2)$ near $|z|=0$.
For $s > \frac{d+3}2$ and any multiplier $B\in S^0$ we have
\begin{equation}\label{fe:xxycom}
\| \nabla [S_{< k-4} g,B(D)] \nabla S_k u  \|_{Y_k} \lesssim 
 \|w\|^2_{l^2 X^{s}} (1+\|w\|_{l^2 X^s}) c(\|w\|_{L^\infty}) \|S_k u\|_{X_k}.
\end{equation}
\end{prop}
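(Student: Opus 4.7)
The plan is to expand the commutator in the standard way, use the quadratic vanishing $h(z)=O(|z|^2)$ to expose a genuine trilinear structure in $(w,w,S_k u)$, and then quote the trilinear estimates \eqref{tri}--\eqref{tri3} from the preceding proposition. The key saving is that the commutator yields one derivative of smoothing, and the two factors of $w$ coming from $h(z)=O(|z|^2)$ allow the bookkeeping to succeed at the critical regularity $s>\frac{d+3}{2}$, rather than $s>\frac{d+5}{2}$ as would be needed in the quadratic setting of \cite{MMT3}.

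I would first expand the commutator using the standard pseudodifferential identity: since $S_{<k-4}g$ is frequency-localized below $2^{k-4}$ while $\nabla S_k u$ sits at frequency $\sim 2^k$,
\[
[S_{<k-4}g,B(D)]\nabla S_k u \;=\; \sum_{|\alpha|\geq 1}\frac{(-i)^{|\alpha|}}{\alpha!}\,(\partial^{\alpha}S_{<k-4}g)\,(\partial_\xi^\alpha B)(D)\nabla S_k u,
\]
with $(\partial_\xi^\alpha B)(D)\in S^{-|\alpha|}$. Applying the outer $\nabla$ and distributing via Leibniz, the entire expression is a finite sum of terms of the schematic shape $(D^{\alpha_1}S_{<k-4}g)\,\Psi(D)S_k u$ with $|\alpha_1|\geq 1$ and $\Psi\in S^{2-|\alpha_1|}$. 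Since $\Psi(D)$ is a translation-invariant Fourier multiplier, acting on a frequency-$2^k$ object it can be absorbed into an overall scalar factor $2^{k(2-|\alpha_1|)}\|S_k u\|_{X_k}$ on the right-hand side of the eventual bound.

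Next, the hypothesis $h(z)=O(|z|^2)$, together with the chain and product rules, shows that every $D^{\alpha_1}S_{<k-4}g=D^{\alpha_1}S_{<k-4}h(w)$ is a finite linear combination of expressions of the form $S_{<k-4}(F(w)\,D^{\gamma_1}w\,D^{\gamma_2}w)$ with $|\gamma_1|+|\gamma_2|=|\alpha_1|$ and $F$ smooth. The crucial point is the presence of \emph{two} factors of $w$, which is exactly the consequence of the quadratic vanishing. The smooth factor $F(w)$ is placed into $l^2 X^s$ using the Moser estimate \eqref{fe:moser}, contributing a $c(\|w\|_{L^\infty})(1+\|w\|_{l^2 X^s})$ factor to the final bound.

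Combining these two steps, $\nabla[S_{<k-4}g,B(D)]\nabla S_k u$ is bounded by a finite sum of trilinear expressions of the form
\[
S_{<k-4}\!\bigl(F(w)\,D^{\gamma_1}w\,D^{\gamma_2}w\bigr)\,\Psi(D)S_k u, \qquad |\gamma_1|+|\gamma_2|+|\beta|=2,
\]
and the $Y_k$ estimate for each such term follows from the trilinear bounds of the preceding proposition: the low-low-high configuration (both $w$-factors at frequency $<2^{k-4}$) is handled by \eqref{tri} or \eqref{tri2} depending on how the derivatives fall, while the high-high-low configuration (both $w$-factors at frequency $\geq 2^{k-4}$ with their product reprojected below $2^{k-4}$) is precisely the content of \eqref{tri3}. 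In every case the output is bounded by $\|w\|_{l^2 X^s}^2\,c(\|w\|_{L^\infty})(1+\|w\|_{l^2 X^s})\,\|S_k u\|_{X_k}$, which is the claimed estimate. The main obstacle is simply the case-by-case bookkeeping of the many frequency configurations and derivative distributions to verify each falls under one of the three trilinear estimates; the authors' remark that this closely resembles \cite{MMT3} indicates this is the routine but tedious part.
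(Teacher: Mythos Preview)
Your overall strategy---extract one derivative from the commutator, use $h(z)=O(|z|^2)$ to expose two $w$ factors, then close with the trilinear estimates---is exactly the right one and matches the paper. The gap is in your first step. The Kohn--Nirenberg expansion you write is only an \emph{asymptotic} expansion, not an identity, and certainly not a finite sum: for a generic $B\in S^0$ the Taylor series of $B(\eta)$ about $\xi$ need not converge to $B(\eta)$, and even though the frequency separation between $S_{<k-4}g$ and $S_k u$ formally gives a factor $2^{-4|\alpha|}$ per term, the seminorm constants $C_\alpha$ in $|\partial_\xi^\alpha B(\xi)|\le C_\alpha\langle\xi\rangle^{-|\alpha|}$ are not controlled uniformly in $\alpha$, so the series cannot be summed as written. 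Your assertion that ``the entire expression is a finite sum'' is simply false.

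The paper sidesteps this by invoking from \cite{MMT3} an \emph{exact} bilinear representation
\[
\nabla[S_{<k-4}g,B(D)]\nabla S_k u \;=\; L(\nabla S_{<k-4}g,\,\nabla S_k u),
\qquad
L(f,g)(x)=\int f(x+y)\,g(x+z)\,\omega(y,z)\,dy\,dz,
\]
with $\omega\in L^1$. (Morally this comes from the first-order Taylor formula with integral remainder, $B(\eta)-B(\xi)=(\eta-\xi)\cdot\int_0^1\nabla B((1-t)\xi+t\eta)\,dt$, together with the frequency separation, which keeps the argument of $\nabla B$ at scale $2^k$; the $L^1$ kernel bound is the nontrivial content of \cite[Proposition~3.2]{MMT3}.) Since all norms in play are translation invariant, the $L^1$ kernel can be dropped, and one is reduced to a single product $(\nabla S_{<k-4}h(w))\cdot(\nabla S_k u)$ in $Y_k$. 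From there the proof is shorter than your sketch: writing $\nabla h(w)=h'(w)\nabla w$ with $h'(0)=0$, one applies \eqref{fe:moser} to $h'(w)$ and then \eqref{tri} to the resulting trilinear product. No frequency case analysis is needed, and neither \eqref{tri2} nor \eqref{tri3} enters; incidentally, your citation of \eqref{tri3} for the ``high-high reprojected low'' configuration is backwards, since \eqref{tri3} carries the projector $S_{\ge k-4}$ rather than $S_{<k-4}$.
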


\begin{proof}
In \cite[Proposition 3.2]{MMT3}, it was shown that
\[\nabla[S_{<k-4} g, B(D)]\nabla S_k u = L(\nabla S_{<k-4} g, \nabla
S_k u),\]
where $L$ is a translation invariant operator satisfying
\[L(f,g)(x)=\int f(x+y)g(x+z)w(y,z)\,dy\,dz,\quad w\in L^1.\]
We shall not reproduce this proof here.

Given this representation, as we are working in translation invariant
spaces, the bound \eqref{fe:xxycom} follows immediately from
\eqref{fe:moser} and \eqref{tri}.
\end{proof}

%


\section{Local Energy Decay}
\label{sec:LED}
Here we consider the main linear estimate which we shall employ in
order to prove Theorem~\ref{thm:main1}.  This is a variant on the
well-known local smoothing estimates for the linear Schr\"odinger
equation.  The metric is assumed to be
a composition
\begin{equation}\label{composition}g^{kl}-\delta^{kl} = h^{kl}(w(t,x))\end{equation}
where $h(z)=O(|z|^2)$ near $|z|=0$.  The focus, then, is on a
frequency localized linear Schr\"odinger equation
\begin{equation} \label{freq-loc}
(i \partial_t + \partial_k g^{kl}_{< j-4}  \partial_l ) u_j = f_j, \qquad 
u_j(0)= u_{0j} .
\end{equation}
In the sequel, we shall pass to this setting.

The main estimate is:
\begin{prop}
\label{prop:morawetz}
Assume that the coefficients $g^{kl}$ in  \eqref{freq-loc} 
satisfy \eqref{composition} with
\begin{equation}\label{small_hyp}
\|w\|_{l^2 X^s} \ll 1
\end{equation}
for some $s>\frac{d+3}{2}$.  
Let   $u_j$  be a solution to  \eqref{freq-loc}  which
is localized at frequency $2^j$. Then the following estimate 
holds:
\begin{equation} \label{l1le}
 \| u_{j} \|_{l^2_j X_j} \lesssim \| u_{0j} \|_{L^2} +  
 \| f_{j} \|_{l^2_j Y_j} .
\end{equation}
\end{prop}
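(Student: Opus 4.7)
My plan is to split the target norm $\|u_j\|_{l^2_j X_j} = 2^{j/2}\|u_j\|_X + \|u_j\|_{L^\infty L^2}$ into an energy piece and a local smoothing piece, and treat them by an energy estimate and a positive-commutator (Morawetz) argument respectively.

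\textbf{Step 1: Energy estimate.} Pairing \eqref{freq-loc} with $\bar u_j$ and taking imaginary parts, the principal term $\int \partial_k g^{kl}_{<j-4} \partial_l u_j \cdot \bar u_j\,dx$ is real after integration by parts because $g$ is symmetric, so only $\partial_t \|u_j\|_{L^2}^2$ and $\Im\int f_j \bar u_j$ survive. The contribution from $f_j \in Y_j$ is handled by duality $Y^* = X$: writing $f_j = 2^{j/2} f_{j,1} + f_{j,2}$ with $f_{j,1} \in Y$ and $f_{j,2} \in L^1L^2$, the first term is controlled by $2^{j/2}\|f_{j,1}\|_Y \|u_j\|_X \lesssim \|f_{j,1}\|_Y \|u_j\|_{l^2_j X_j}$, and the second is standard. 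This yields $\|u_j\|_{L^\infty L^2} \lesssim \|u_{0j}\|_{L^2} + \|f_j\|_{l^2_j Y_j} + \text{(small)}\|u_j\|_{l^2_j X_j}$.

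\textbf{Step 2: Local smoothing by positive commutator.} I would decompose frequency space into $d$ wedges $W_k$ where $|\xi_k| \gtrsim |\xi|$, and write $u_j = \sum_k u_j^{(k)}$ with each piece microlocalized to $W_k$. For the $k$-th piece, I use a Morawetz multiplier $M_k = \varphi(x_k/2^l) \partial_k$ or a pseudo-differential multiplier whose symbol is (roughly) $\arctan(x_k/2^l) \cdot \xi_k/|\xi|$ adapted to the wedge and to a dyadic scale $2^l$. Computing the commutator $[i\partial_t + \partial_m g^{mn}_{<j-4}\partial_n, M_k]$ and pairing with $u_j^{(k)}$, the principal part produces (after integration in $t$ and $x$) a positive quantity controlling $2^{-l}\|u_j^{(k)}\|_{L^2L^2([0,1]\times Q)}^2$ uniformly in $Q \in \mathcal Q_l$, which upon taking supremum over $l$ and $Q$ and summing over the $d$ wedges gives $2^j\|u_j\|_X^2$. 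The boundary-in-time terms are absorbed into $\|u_j\|_{L^\infty L^2}^2$ already controlled in Step 1, and the $f_j$ contributions are bounded by $\|f_j\|_{l^2_j Y_j} \|u_j\|_{l^2_j X_j}$ via the same duality as above.

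\textbf{Step 3: Metric error terms.} The metric commutator $[\partial_m (g^{mn}_{<j-4} - \delta^{mn})\partial_n, M_k]$ is the main source of error. Using the representation $g^{mn}_{<j-4} - \delta^{mn} = S_{<j-4} h^{mn}(w)$ from \eqref{composition}, Proposition \ref{propComm} lets me move $M_k$ past the coefficients at the cost of a term of the form $\|w\|^2_{l^2 X^s}(1+\|w\|_{l^2 X^s}) c(\|w\|_{L^\infty}) \|u_j\|_{X_j}$ in the $Y_k$ norm, which pairs with $u_j$ to give at most a small constant times $\|u_j\|_{l^2_j X_j}^2$ by the smallness hypothesis \eqref{small_hyp}. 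The remaining non-commutator metric contribution is of the shape $\int w^2 |\nabla u_j|^2 \cdot (\text{bounded multiplier})$, which is estimated directly via the trilinear bound \eqref{tri} (after Littlewood--Paley decomposing $w$ and placing two copies in $l^2 X^{s-1}$ and the derivative of $u_j$ in $X_j$).

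\textbf{Absorption and conclusion.} Combining Steps 1--3, I obtain
\begin{equation*}
\|u_j\|_{l^2_j X_j}^2 \lesssim \|u_{0j}\|_{L^2}^2 + \|f_j\|_{l^2_j Y_j}^2 + (\|w\|_{l^2 X^s}^2 + \text{small}) \|u_j\|_{l^2_j X_j}^2,
\end{equation*}
and the smallness assumption \eqref{small_hyp} allows me to absorb the last term on the left, giving \eqref{l1le}.

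The main obstacle I anticipate is the Morawetz calculation itself: choosing a multiplier whose principal commutator is positive in each coordinate wedge uniformly across dyadic scales $2^l$ and uniformly over cubes $Q \in \mathcal Q_l$, while keeping the lower-order commutators (with both the metric perturbation and the wedge cutoff) controllable by the function-space norms of $w$ from Section \ref{sec:mult}. The wedge decomposition is essential here because a single global multiplier cannot capture all coordinate directions at once; conversely, since the spaces are translation invariant and $S^0$-multipliers act boundedly (cf.\ the remark at the end of the proof of Proposition \ref{propComm}), restricting to wedges is essentially free.
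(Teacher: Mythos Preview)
Your overall strategy---energy estimate, positive commutator with a wedge decomposition, and trilinear bounds for the metric errors---matches the paper's approach closely. But there is a genuine gap: you have conflated the $X_j$ norm with the $l^2_j X_j$ norm. In your opening line you write $\|u_j\|_{l^2_j X_j} = 2^{j/2}\|u_j\|_X + \|u_j\|_{L^\infty L^2}$, but that is the definition of $\|u_j\|_{X_j}$; the $l^2_j X_j$ norm is the $\ell^2$ sum of $\|\chi_Q u_j\|_{X_j}$ over cubes $Q \in \mathcal Q_j$, which is strictly stronger. Your Morawetz argument in Step~2 produces a bound on $2^{-l}\|u_j\|_{L^2L^2(Q)}^2$ uniformly in $Q$ and $l$, and taking the supremum gives only $\|u_j\|_X^2$. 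Combined with Step~1 this yields the $X_j$ bound, not $l^2_j X_j$.

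The paper closes this gap with a separate spatial localization step after first proving $\|u_j\|_{X_j} \lesssim \|u_{0j}\|_{L^2} + \|f_j\|_{Y_j}$: one takes a partition $\{\chi_Q\}$ into cubes of scale $M2^j$ with $M$ large, applies the $X_j$ bound to each $\chi_Q u_j$, and sums in $\ell^2$. The commutator $[A,\chi_Q]u_j$ lands in $L^1L^2$ with an extra factor $M^{-1}$ (since $|\nabla \chi_Q| \lesssim (M2^j)^{-1}$), so for $M$ large it can be bootstrapped. You should add this step.

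A smaller point: in Step~3 you invoke Proposition~\ref{propComm} for the commutator of the metric with the Morawetz multiplier $M_k$, but that proposition is stated for Fourier multipliers $B(D)\in S^0$, whereas $M_k$ has $x$-dependent coefficients. The paper handles $[A,\mathcal M]$ by a direct computation (Lemma~\ref{commLemma}), reducing to the trilinear bound \eqref{tri}; Proposition~\ref{propComm} is reserved for the commutator with the angular cutoffs $\Theta_{j,k}$, which \emph{are} $S^0$ multipliers. Your citation is misplaced, though the underlying estimate is the same in spirit.
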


The proof is closely related to that given in \cite{MMT3} and uses a
positive commutator argument.  The multipliers will be first order
differential operators with smooth coefficients which are localized at
frequency $\lesssim 1$; precisely, for estimating waves at frequency $2^j$
we will use multipliers $\M$ that are differential operators having the form
\begin{equation}\label{defM}
i 2^j \M = a^k(x) \partial_k + \partial_k a^k(x)
\end{equation}
with uniform bounds on $a$ and its derivatives.

A key spot where the proof here differs from that of \cite{MMT3} is in
the following lemma which is used to dismiss the $g-I$ contribution to
the commutator.

\begin{lemma}\label{commLemma}
  Let $A = \partial_k S_{<j-4}(h^{kl}(w(t,x))) \partial_l $ with $h$
  as above, and let $\M$ be as above.  Suppose that $s>\frac{d+3}{2}$.
  Then we have
\begin{equation}
| \la [A,\M]u_j,u_j\ra| \lesssim  \|w\|^2_{l^2 X^s} (1+\|w\|_{l^2 X^s})c(\|w\|_{L^\infty}) \|u_j\|_{X_j}^2.
\end{equation}
\end{lemma}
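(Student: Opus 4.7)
The plan is to reduce the lemma to Proposition~\ref{propComm} by a duality argument. Since $Y_j$ and $X_j$ are dual, the bilinear pairing satisfies
\[
|\langle [A,\M] u_j, u_j \rangle | \le \|[A,\M]u_j\|_{Y_j}\,\|u_j\|_{X_j},
\]
so it suffices to establish the one-sided bound $\|[A,\M] u_j\|_{Y_j} \lesssim \|w\|_{l^2 X^s}^2\,(1+\|w\|_{l^2 X^s})\,c(\|w\|_{L^\infty})\,\|u_j\|_{X_j}$. To match the form of Proposition~\ref{propComm}, I expand the commutator via the Leibniz identity as
\[
[A,\M] = \partial_k[S_{<j-4}(h^{kl}(w)),\M]\,\partial_l + \partial_k S_{<j-4}(h^{kl}(w))\,[\partial_l,\M] + [\partial_k,\M]\,S_{<j-4}(h^{kl}(w))\,\partial_l .
\]

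The leading term has exactly the form $\nabla[S_{<j-4}g,\,\cdot\,]\nabla$ appearing in Proposition~\ref{propComm}, with the differential operator $\M$ from~\eqref{defM} in place of the Fourier multiplier $B(D)\in S^0$. A direct calculation using~\eqref{defM} shows that $[\partial_k,\M]$ and $[\partial_l,\M]$ are themselves first-order differential operators with smooth, uniformly bounded, frequency-$\lesssim 1$ coefficients, i.e.\ of exactly the same type as $\M$. Consequently the two remaining commutator terms share the structure of the principal one and are handled in the same way.

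The core step is therefore to verify that the conclusion of Proposition~\ref{propComm} persists when $B(D)$ is replaced by $\M$. That proposition relies on the kernel identity from \cite[Proposition 3.2]{MMT3}, which represents $\nabla[S_{<k-4}g,B(D)]\nabla$ as a translation-invariant bilinear integral operator $L(\nabla g,\nabla\,\cdot\,)$ with an $L^1$ kernel, combined with the Moser estimate~\eqref{fe:moser} and the trilinear estimate~\eqref{tri}. Since $\M$ acts as a uniformly bounded operator on $L^2$ when applied to frequency-$2^j$ inputs (the $2^{-j}$ prefactor in~\eqref{defM} cancels the single derivative) and its coefficients $a^k$ are smooth and low-frequency, the commutator $[S_{<j-4}g,\M]$ admits an analogous kernel representation. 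Once that is in place, the vanishing of $h^{kl}$ to order two at the origin produces the $\|w\|_{l^2 X^s}^2$ factor via~\eqref{fe:moser}, and~\eqref{tri} then completes the $Y_j$ bound. The main obstacle I anticipate is precisely this kernel adaptation: the translation invariance of $B(D)$ used in \cite{MMT3} must be replaced by exploitation of the smoothness and low-frequency localization of $\M$'s coefficients, which yield only rapidly decaying Schwartz tails and hence preserve the required $L^1$ bound on the kernel.
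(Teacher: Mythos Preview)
Your strategy is workable but takes an unnecessary detour, and the paper's argument is considerably more direct. The key observation you are missing is that $\M$ is not a pseudodifferential operator requiring a kernel representation: it is a first-order \emph{differential} operator with smooth low-frequency coefficients. Consequently the commutator $[A,\M]$ can be computed explicitly. The paper simply writes
\[
i[A,\M] = 2^{-j}\bigl(\nabla(h\nabla a + a\nabla h)\nabla + \nabla h\,\nabla^2 a + h\,\nabla^3 a\bigr),
\]
identifies the worst term $2^{-j}\la a\nabla h\,\nabla u_j,\nabla u_j\ra$, and bounds it by duality together with \eqref{fe:moser} and \eqref{tri}. No analogue of the translation-invariant kernel identity from \cite[Proposition~3.2]{MMT3} is needed at all.

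In your decomposition, the ``principal'' term $\partial_k[g,\M]\partial_l$ is already explicit: since $i2^j\M = a^m\partial_m + \partial_m a^m$, one has $[g,\M] = -\tfrac{2}{i2^j}\,a^m\partial_m g$, a pure multiplication operator. So the ``kernel adaptation'' you flag as the main obstacle is vacuous---the commutator collapses by direct calculation, and you land exactly on the paper's worst term. Your remaining two Leibniz pieces do not, as you claim, share the commutator structure of the principal one; they are of the form $\nabla(h\,\tilde\M)$ and $\tilde\M(h\,\nabla)$ with $\tilde\M$ another operator of type $\M$, hence strictly easier (no $\nabla h$ appears). Once you carry this out, your argument and the paper's coincide; the invocation of Proposition~\ref{propComm} is superfluous here precisely because $\M$ is differential rather than a genuine Fourier multiplier.
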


\begin{proof}
We note that we can abstractly write $[A,\M]$ as
\[
i[A,\M] = 2^{-j} ( \nabla ( h \nabla a + a \nabla h) \nabla + \nabla h \nabla^2 a +
h \nabla^3 a )
\]
where $h$ abbreviates $S_{<j-4}(h(w(t,x)))$.  As the $a$ factors are
bounded and low frequency, the worst term to bound is
\[
2^{-j} \la a \nabla h \nabla u_j, \nabla u_j \ra.
\]
By duality, we only require
\[\|(\nabla h) \nabla u_j\|_{Y_j}\lesssim \|w\|_{l^2 X^s}^2 (1+\|w\|_{l^2 X^s})c(\|w\|_{L^\infty})
\|u_j\|_{X_j},\]
which is a conseqeunce of \eqref{fe:moser} and \eqref{tri}.
\end{proof}

We now continue the proof of the proposition.
For $Q\in \Q_l$, $0\le l\le j$ fixed, we begin by proving that a
solution $u_j$ to \eqref{freq-loc} satisfies
\begin{equation}\label{le-l}
  2^{j-l}\| u_j\|_{L^2(Q)}^2 \lesssim  \ 
\|u_j\|_{L^\infty L^2}^2  +
  \|u_j\|_{X_j} \|f_j\|_{Y_j}  
  + (2^{-j} +  \| w\|^2_{l^2 X^s}) \|u_j\|_{X_j}^2.
\end{equation}

We start by abstracting and studying solutions to 
\[(D_t+A)u=f,\quad u(0)=u_0\]
where $A$ is a self-adjoint operator and $D_t=\frac{1}{i}\partial_t$.  First of all, we have an
energy-type estimate:
\begin{equation}
\| u\|_{L^\infty_t L^2_x}^2 \lesssim \| u_0\|_{L^2}^2
+ \| u\|_{X_j} \|f\|_{Y_j}. 
\label{eest}\end{equation}
See \cite[Lemma 4.2]{MMT3}.  For a self-adjoint multiplier $\M$, we
also have
\begin{equation}\label{abstractcomm}
\frac{d}{dt}\la u,\M u\ra = -2\Im\la(D_t+A)u,\M u\ra + \la
i[A,\M]u,u\ra.
\end{equation}
Thus, for $u=u_j$ and $A=-\partial_k g^{kl}_{<j-4} \partial_l$, 
if we can provide a multiplier $\M$ so that
\begin{enumerate}
\item $\|\M u\|_{L^2_x}\lesssim \|u\|_{L^2_x}$,
\item $\|\M u\|_X \lesssim \|u\|_X$,
\item $i\la [A,\M]u,u\ra \gtrsim 2^{j-\ell} \|u\|_{L^2L^2([0,1]\times Q)}^2
- O(2^{-j} +\|w\|^2_{l^2 X^s}) \| u\|_{X_j}^2$,
\end{enumerate}
\eqref{le-l} would follow.

We first do this when the Fourier transform of the solution $u_j$ is
restricted to a small angle
\begin{equation}
\supp \hat u_j \subset \{ |\xi| \lesssim \xi_1 \}.
\label{uj-ang}\end{equation}
Here, we have done so about the first coordinate axis, though the
modifications to obtain the same about the other axes are trivial.
We take, without loss of generality due to translation invariance,
$Q=\{|x_j|\le 2^l\,:\, j=1,\dots,d\}$, and we set $m$ to be a smooth,
bounded, increasing function such that $m'(s)=\psi^2(s)$ where $\psi$ is a
Schwartz function localized at frequencies $\lesssim 1$, and $\psi
\sim 1$ for $|s|\le 1$.  We rescale $m$ and set $m_l(s)=m(2^{-l}s)$.
Then, we fix
\[\M = \frac{1}{i2^j}\Bigl(m_l(x_1)\partial_1 + \partial_1 m_l(x_1)\Bigr).\]

Properties (1) and (2) are immediate due to the frequency
localizations of $u=u_j$ and $m_l$ as well as the boundedness of
$m_l$.  By Lemma \ref{commLemma}, it suffices to consider property (3)
for $A=-\Delta$.  This yields
\[i2^j [-\Delta,\M]= -2^{-l+2} \partial_1 \psi^2(2^{-l}x_1)\partial_1
+ O(1),\]
and hence
\[i2^j\la [-\Delta,\M]u,u\ra = 2^{-l+2} \|\psi(2^{-l}x_1)\partial_1
u_j\|^2_{L^2L^2} + O(\|u_j\|^2_{L^2L^2}).\]
Utilizing our assumption \eqref{uj-ang}, it follows that
\[2^{-l} 2^j \|\psi(2^{-l} x_1)u_j\|_{L^2L^2}^2 \lesssim i\la
[-\Delta,\M]u_j,u_j\ra + 2^{-j} O(\|u_j\|^2_{L^2L^2}),\]
which yields (3) when combined with Lemma \ref{commLemma}.

We proceed to reducing to the the case that \eqref{uj-ang} holds.  We
let $\{\theta_k(\omega)\}_{k=1}^d$ be a partition of unity,
\[\sum_k \theta_k(\omega)=1,\quad \omega\in \mathbb{S}^{d-1},\]
where $\theta_k(\omega)$ is supported in a small angle about the $k$th
coordinate axis.  Then, we can set $u_{j,k}=\Theta_{j,k}u_j$ where
\[\F \Theta_{j,k}u =
\theta_k\Bigl(\frac{\xi}{|\xi|}\Bigr)\sum_{j-1\le l\le j+1}
\phi_l(\xi) \hat{u}(t,\xi).\]
Here $\phi_l(\xi)$ is the Fourier multiplier associated to $S_l(D)$.
We see that
\[(D_t+A)u_{j,k} = -\Theta_{j,k}f_j + [A,\Theta_{j,k}]u_j.\]

The operators $\Theta_{j,k}$ are bounded on $L^2$ by Plancherel's
theorem.  And as the kernel has Schwartz-type decay outside a ball of
radius $2^{-j}$, it is easy to see that they are also bounded on $X$,
and via duality, $Y$.  Hence, by applying $\M$, suitably adapted to
the correct coordinate axis, to $u_{j,k}$ and summing over $k$, we
obtain
\begin{multline}\label{le-l1}
2^{j-l}\| u_j\|_{L^2(Q)}^2 \lesssim 
\|u_j\|_{L^\infty L^2}^2 +
  \|u_j\|_{X_j} (  \|f_{j}\|_{Y_j}   + \!\! \sum_k \| [A,\Theta_{j,k}] u_j \|_{Y_j})
\\   +  (2^{-j} + \|w\|^2_{l^2 X^s}) \|u_j\|_{X_j}^2. 
\end{multline}
Upon estimating the commutator term with \eqref{fe:xxycom},
\eqref{le-l} follows.  If we then take the supremum over $Q\in \Q_l$
and subsequently over $l$, yields
\[\|u_j\|^2_{X_j} \lesssim \|u_j\|^2_{L^\infty L^2} + \|u_j\|_{X_j}
\|f_j\|_{Y_j} + (2^{-j} + \|w\|^2_{l^2 X^s})\|u_j\|_{X_j}^2,\]
provided $\|w\|^2_{l^2 X^s}$ is bounded.  Using \eqref{small_hyp}, we
may bootstrap to obtain
\begin{equation}\label{needsEest}\|u_j\|^2_{X_j} \lesssim
  \|u_j\|_{L^\infty L^2}^2 + \|u_j\|_{X_j} \|f_j\|_{Y_j}\end{equation}
for $j$ sufficiently large.  On the other hand, for small $j$, we have
the trivial estimate
\[\|u\|_{X_j} \lesssim \|u\|_{L^\infty L^2},\]
which yields the same.

By applying \eqref{eest} to \eqref{needsEest}, we have shown
\begin{equation}\label{noSum}\|u_j\|_{X_j} \lesssim \|u_{0j}\|_{L^2} + \|f_j\|_{Y_j}.\end{equation}
We now finish the proof by incorporating the summation over cubes.  We
let $\{\chi_Q\}$ denote a partition via functions which are localized to
frequencies $\lesssim 1$ which are associated to cubes $Q$ of scale
$M2^j$.  We also assume $|\nabla^k \chi_Q|\lesssim (2^j M)^{-k}$,
$k=1,2$.  Thus,
\[(D_t+A) (\chi_Q u_m) = -\chi_Q f_j + [A,\chi_Q]u_j.\]
Applying \eqref{noSum} to $\chi_Q u_m$, we obtain
\begin{equation}\label{firstSum}\sum_Q \|\chi_Q u_j\|^2_{X_j} \lesssim \|u_{0j}\|_2^2 + \sum_{Q}
\|\chi_Q f_j\|^2_{Y_j} + \sum_Q \|[A,\chi_Q] u_j\|^2_{L^1L^2}.\end{equation}
But
\[\sum_Q \|[A,\chi_Q]u_j\|^2_{L^1 L^2} \lesssim M^{-2} \sum_Q \|\chi_Q
u_j\|_{L^\infty L^2}^2 \lesssim M^{-2} \sum_Q \|\chi_Q u_j\|^2_{X_j}.\]
For $M$ sufficiently large, we can bootstrap the last term of
\eqref{firstSum}, and upon a straightforward transition to cubes of
scale $2^j$ rather than $M2^j$, \eqref{l1le} follows.


\section{Proof of 
Theorem \ref{thm:main1}}
\label{sec:proof}

We prove part (b) of the theorem, from which part (a) follows
immediately.

We first examine the linear equation
\begin{eqnarray}
\label{lin1}
\begin{cases}
  (i\partial_t + \partial_k g^{kl}\partial_l)u + V\nabla u + Wu=H,\\
u(0)=u_0
\end{cases}
\end{eqnarray}
under the assumption that
\[g^{kl}-\delta^{kl} = h^{kl}(u(t,x))\]
where $h(z)=O(|z|^2)$ near $|z|=0$.  We shall also assume that  there exist smooth 
functions $\tilde V$ and $\tilde W$ so that
\[
V =
\tilde{V}(\tilde{w}(t,x)),\qquad W=\tilde{W}(v(t,x),w(t,x), \tilde{w}(t,x))
\]
where the behavior of $\tilde V$ and $\tilde W$ near zero is given by
\[
\tilde{V}(z)=O(|z|^2), \qquad  \tilde{W}(\phi,\psi, \theta)
=\phi \cdot O(|\psi|)+O(|\theta|^2).
\]

Our main estimate is the
following:
\begin{prop}\label{p:lin1}
a) Assume that the metric $g$ and potentials $V$ and $W$ are as above, and they satisfy 
\[
\| w \|_{l^2 X^s} \ll 1, \quad \|\tilde{w}\|_{l^2 X^{s-1}} \ll 1 , \quad
\|v\|_{l^2 X^{s-2}} \ll 1  \qquad  s > \frac{d+3}2.
\]
Then the equation \eqref{lin1} is well-posed for initial data
$u_0 \in H^\sigma$ with $0 \leq \sigma \leq s-1$,
and we have the estimate
\begin{equation}\label{bd:lin1}
\| u\|_{l^2 X^\sigma} \lesssim \|u_0\|_{H^\sigma} + \|H\|_{l^2 Y^\sigma}.
\end{equation}

b) Assume in addition that $W=0$. Then the equation \eqref{lin1} is well-posed
 for initial data $u_0 \in H^\sigma$ with $0 \leq \sigma \leq s$, 
and the estimate \eqref{bd:lin1} holds.
\end{prop}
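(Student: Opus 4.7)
The proof reduces to establishing the a priori bound \eqref{bd:lin1}; once this is available, uniqueness and continuous dependence in $H^\sigma$ follow by applying \eqref{bd:lin1} to differences of solutions, and existence is obtained by a standard regularization (e.g.\ smoothing the metric, potentials, datum, and forcing), solving the smooth problem on $[0,1]$, and passing to the limit with the aid of the uniform bound.

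The first step in proving \eqref{bd:lin1} is to frequency-localize. Applying $S_j$ to \eqref{lin1} and paralinearizing the second order part yields
\[
(i\p_t + \p_k S_{<j-4} g^{kl} \p_l ) u_j \ = \ S_j H \ - \ S_j(V \nabla u) \ - \ S_j(W u) \ + \ E_j,
\]
where $E_j$ collects three types of paralinearization errors coming from $\p_k g^{kl} \p_l u$: the high-frequency coefficient piece $\p_k S_{\geq j-4} g^{kl} \p_l u_j$, the classical commutator $[\p_k S_{<j-4} g^{kl} \p_l , S_j] u$ of the form treated by Proposition \ref{propComm}, and the high-high residual $S_j \p_k \Pi(g^{kl}, \p_l u)$. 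With $w$ small in $l^2 X^s$ as in \eqref{small_hyp}, Proposition \ref{prop:morawetz} applies to this localized divergence-form equation and produces
\[
\|u_j\|_{l^2_j X_j} \lesssim \|S_j u_0\|_{L^2} + \|S_j H\|_{l^2_j Y_j} + \|S_j(V \nabla u)\|_{l^2_j Y_j} + \|S_j(Wu)\|_{l^2_j Y_j} + \|E_j\|_{l^2_j Y_j}.
\]

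The three perturbative right-hand sides are controlled by the multilinear machinery of Section \ref{sec:mult}. Because $\tilde V$ vanishes to second order at the origin, $V = \tilde V(\tilde w)$ decomposes schematically as $\tilde w \cdot \tilde w \cdot F(\tilde w)$ with $F$ smooth, so $V\nabla u$ is trilinear in $\tilde w$, $\tilde w$, $\nabla u$ modulo a Moser-type smooth factor controlled by \eqref{fe:moser}; the estimate \eqref{tri} with $\nabla u \in X^{\sigma-1}$ then bounds $S_j(V\nabla u)$ in $l^2 Y^\sigma$ by a product of frequency envelopes, $\|\tilde w\|_{l^2 X^{s-1}}^2$, and $\|u\|_{l^2 X^\sigma}$, valid for the full range $0 \leq \sigma \leq s$. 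For $Wu$, the structure of $\tilde W$ produces two contributions: a piece $\sim \tilde w \cdot \tilde w \cdot u$ handled again by \eqref{tri} in the full range, and a piece $\sim v \cdot w \cdot u$ handled by \eqref{tri2}, which is however only available for $0 \leq \sigma \leq s-1$. This is precisely what forces the range restriction in part (a), and it disappears in part (b) where $W = 0$, extending the admissible range up to $\sigma = s$. Finally, the commutator error $E_j$ is schematically trilinear in $w, w$, and second derivatives of $u_j$: its low-high and Fourier-multiplier commutator parts are absorbed by Proposition \ref{propComm}, while the high-high residual is bounded via \eqref{tri3}, yielding $\|E_j\|_{l^2_j Y_j} \lesssim \|w\|_{l^2 X^s}^2 (1+\|w\|_{l^2 X^s}) c(\|w\|_{L^\infty}) \|u_j\|_{X_j}$.

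Multiplying the per-frequency estimate by $2^{2j\sigma}$, summing in $j$, and invoking the smallness hypotheses on $w, \tilde w, v$, each perturbative contribution is controlled by a small constant times $\|u\|_{l^2 X^\sigma}$ and can be absorbed on the left-hand side. The remaining terms reconstitute $\|u_0\|_{H^\sigma}$ and $\|H\|_{l^2 Y^\sigma}$, yielding \eqref{bd:lin1}. The principal technical obstacle is the careful bookkeeping for $E_j$: one must verify that every piece of the paralinearization error can either be recast in the translation-invariant kernel form of Proposition \ref{propComm} or handled by the trilinear bounds \eqref{tri}--\eqref{tri3}, so that the $l^2 X^s$-smallness of $w$ genuinely absorbs the full error into the solution norm rather than producing an uncontrolled loss of derivatives.
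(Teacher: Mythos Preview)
Your proposal is correct and mirrors the paper's proof: frequency-localize, apply Proposition~\ref{prop:morawetz}, estimate $V\nabla u$ by \eqref{tri}, $Wu$ by \eqref{tri} and \eqref{tri2} (the latter forcing $\sigma\le s-1$, which is exactly the distinction between parts (a) and (b)), and the metric paralinearization errors by \eqref{fe:xxycom} and \eqref{tri3}, then absorb via smallness. One minor bookkeeping point: the paper organizes the metric error into just two pieces, $-S_j\partial_k g^{kl}_{>j-4}\partial_l u$ and $-[S_j,\partial_k g^{kl}_{<j-4}\partial_l]u$, and your displayed bound $\|E_j\|_{l^2_j Y_j}\lesssim \ldots \|u_j\|_{X_j}$ is accurate only for the commutator piece---the high-coefficient piece (handled by \eqref{tri3}) involves the full $u$ and is estimated directly in $l^2 Y^\sigma$ against $\|u\|_{l^2 X^\sigma}$, though the square-summed conclusion you state is correct.
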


\begin{proof}
The existence part follows in a standard manner 
by approximating the metric, potentials and data by Schwartz
functions and passing to the limit on a subsequence,
provided that we have the uniform bound \eqref{bd:lin1}.
Hence the remainder of the proof is devoted to 
establishing \eqref{bd:lin1}.

For $u$ solving \eqref{lin1}, we see that $u_j$ solves
\[
\begin{cases}
 (i\partial_t + \partial_k g^{kl}_{<j-4}\partial_l)u_j = G_j + H_j,\\
u_j(0)=u_{0j},
\end{cases}
\]
where
\[G_j = -S_j\partial_k g^{kl}_{>j-4}\partial_l u - [S_j, \partial_k
g^{kl}_{<j-4}\partial_l]u - S_j V \nabla u - S_j Wu.\]
If we apply Proposition \ref{prop:morawetz} to each of these
equations, we see that
\[\|u\|^2_{l^2 X^\sigma} \lesssim \|u_0\|^2_{H^\sigma} +
\|H\|_{l^2Y^\sigma}^2 + \sum_j \|G_j\|^2_{l^2Y^\sigma}.\]

We claim that
\[\sum_j \|G_j\|^2_{l^2 Y^\sigma} \lesssim \|u\|^2_{l^2
  X^\sigma}\Bigl(\|w\|_{l^2 X^s}^4 + \|\tilde{w}\|_{l^2 X^{s-1}}^4 +
\|v\|_{l^2 X^{s-2}}^4\Bigr).\]
Indeed, for the first term in $G_j$ we apply \eqref{fe:moser} and
\eqref{tri3}. 
The bound for the second term in $G_j$ follows from \eqref{fe:xxycom}
and \eqref{fe:moser}.  The bounds for the last two terms of $G_j$ use
\eqref{tri} and \eqref{tri2} respectively in conjunction with \eqref{fe:moser}.
The differences in the range of permissible $\sigma$ in \eqref{tri}
versus \eqref{tri2} accounts precisely for the difference in parts (a)
and (b) of this proposition.
\end{proof}

\subsection{The iteration}
We now set up an iteration to solve \eqref{eqn:quasi1}.  We set $u^{(0)}\equiv 0$ and recursively
define $u^{(n+1)}$ to be the solution to
\begin{equation}
\label{iterate}
\left\{ \begin{array}{l}
(i \partial_t + \p_j g^{jk} (u^{(n)})  \p_k) u^{(n+1)} = 
F(u^{(n)},\nabla u^{(n)}) , \\ \\
u^{(n+1)} (0,x) = u_0 (x).
\end{array} \right. 
\end{equation}

As $F(y,z)=O(|y|^3+|z|^3)$ near the origin, it follows from
\eqref{fe:moser} with $\sigma=s-1$ and \eqref{tri} with $\sigma=s$
that 
\[\|F(u^{(n)}, \nabla u^{(n)})\|_{l^2 Y^s} \lesssim \|u^{(n)}\|^3_{l^2
  X^s}\]
provided $s>\frac{d+3}{2}$ and $\|u^{(n)}\|_{l^2 X^s}=O(1)$.  Using
this in each application of Proposition \ref{p:lin1}, we see, via induction, that
\begin{equation}\label{unifbdd}\|u^{(n)}\|_{l^2 X^s} \lesssim \|u_0\|_{H^s}\end{equation}
for each $n$ provided that $\|u_0\|_{H^s}$ is sufficiently small.

We now seek to show that the iteration converges in $l^2 X^{s-1}$.  To
this end, we note that $v^{(n+1)} = u^{(n+1)}-u^{(n)}$ solves
\begin{equation}
\label{iterate-diff}
\left\{ \begin{array}{l}
(i \partial_t + \p_j g^{jk} (u^{(n)})  \p_k) v^{(n+1)} =  V_n \nabla  v^{(n)}+ W_n v^{(n)},
 \\ 
v^{(n+1)}(0,x) = 0,
\end{array} \right. 
\end{equation}
where 
\begin{eqnarray*}
V_n & = & V_n(u^{(n)},\nabla u^{(n)},u^{(n-1)},\nabla u^{(n-1)}), \\
W_n & = & h_1(u^{(n)}, \nabla u^{(n)}, u^{(n-1)}, \nabla u^{(n-1)})  + h_2 (u^{(n)},u^{(n-1)})  \nabla^2 u^{(n)}.
\end{eqnarray*}
Here $V_n(z)=O(|z|^2)$, $h_1(z)=O(|z|^2)$, and $h_2(z)=O(|z|)$ near
$|z|=0$.   By Proposition \ref{p:lin1}, we have
\[\|v^{(n+1)}\|_{l^2 X^{s-1}} \lesssim \|V_n \nabla v^{(n)}\|_{l^2
  Y^{s-1}} + \|W_n v^{(n)}\|_{l^2Y^{s-1}}.\]  
By \eqref{tri} and \eqref{tri2} with $\sigma=s-1$, we have
\begin{align*}\|V_n \nabla v^{(n)}\|_{l^2 Y^{s-1}} &\lesssim \Bigl(\|u^{(n)}\|_{l^2
  X^{s}} + \|u^{(n-1)}\|_{l^2 X^{s}}\Bigr)^2 \|v^{(n)}\|_{l^2
  X^{s-1}},\\
\|W_n v^{(n)}\|_{l^2 Y^{s-1}}&\lesssim \Bigl(\|u^{(n)}\|_{l^2 X^s} +
\|u^{(n-1)}\|_{l^2 X^{s}}\Bigr)^2 \|v^{(n)}\|_{l^2 X^{s-1}}.
\end{align*}
Thus, it follows from \eqref{unifbdd} that
\begin{equation}
  \label{Cauchy}
  \|v^{(n+1)}\|_{l^2 X^{s-1}} \ll \|v^{(n)}\|_{l^2 X^{s-1}},
\end{equation}
which implies that the iteration converges in $l^2 X^{s-1}$ to a
function $u$ satisfying
\begin{equation}\label{goalBd}\|u\|_{l^2 X^s} \lesssim \|u_0\|_{H^s}.\end{equation}
This establishes the existence of a solution.

We next consider the question of uniqueness.  For two solutions $u^{(1)}$ and
$u^{(2)}$ to \eqref{eqn:quasi1}, we consider $v=u^{(2)}-u^{(1)}$.  Here, $v$ solves
\begin{equation}
\label{diff}
\left\{ \begin{array}{l}
(i \partial_t + \p_j g^{jk} (u^{(2)})  \p_k) v =  V \nabla  v+ W v,
 \\ 
v(0,x) = u^{(2)}-u^{(1)},
\end{array} \right. 
\end{equation}
where 
\begin{eqnarray*}
V & = & V(u^{(1)},\nabla u^{(1)},u^{(2)},\nabla u^{(2)}), \\
W_n & = & h_1(u^{(2)}, \nabla u^{(2)}, u^{(1)}, \nabla u^{(1)})  + h_2 (u^{(2)},u^{(1)})  \nabla^2 u^{(1)}.
\end{eqnarray*}
By Proposition \ref{p:lin1}, we have
\begin{equation}\label{weak-lip}
\|  u^{(2)} - u^{(1)}\|_{l^2 X^{s-1}} \lesssim \|  u^{(2)}(0) - u^{(1)}(0)\|_{H^{s-1}}
\end{equation}
from which uniqueness follows.

In order to show continuity of the map $u_0\to u$ from $H^s\to
l^2X^s$, we seek to strengthen the preceding argument.  To do so, we
shall use frequency envelopes.  Indeed, we first seek to strengthen
\eqref{goalBd} by showing that a frequency envelope for the data is
also a frequency envelope for the solution.

\begin{prop}\label{envelopes}  Let $s>\frac{d+3}{2}$, and
let $u \in l^2 X^s$ be a small data solution to \eqref{eqn:quasi1}, which satisfies
\eqref{goalBd}. If $\{a_j\}$ is an admissible frequency envelope for the initial
data $u_0$ in $H^s$, then $\{a_j\}$ is also a frequency envelope for $u$ 
in $l^2 X^s$.
\end{prop}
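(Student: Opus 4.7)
The plan is to propagate the frequency envelope $\{a_j\}$ of the initial data $u_0$ through the iteration \eqref{iterate} used to construct $u$, and then pass to the limit $n\to\infty$. Specifically, I will show by induction on $n$ that there is a constant $C$, independent of $n$, such that
\[
\|S_j u^{(n)}\|_{l^2_j X_j} \le C a_j \|u_0\|_{H^s} \quad \text{for all } j, n \ge 0.
\]
Once this is established, the convergence of $u^{(n)}$ to $u$ in $l^2 X^{s-1}$ (from \eqref{Cauchy}) forces $S_j u^{(n)} \to S_j u$ in $l^2_j X_j$ for each fixed $j$, so the bound is inherited by $u$ in the limit. Combined with \eqref{goalBd}, this is exactly the statement that $\{a_j\}$ is a frequency envelope for $u$ in $l^2 X^s$.

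The base case $u^{(0)}\equiv 0$ is trivial. For the inductive step, apply $S_j$ to \eqref{iterate} to obtain a frequency-localized linear Schr\"odinger equation
\[
(i\partial_t + \partial_k g^{kl}_{<j-4}(u^{(n)})\partial_l) S_j u^{(n+1)} = S_j F(u^{(n)}, \nabla u^{(n)}) + G_j^{(n)},
\]
with initial data $S_j u_0$, where $G_j^{(n)}$ collects the high-frequency metric piece $-\partial_k S_j(g^{kl}_{>j-4}(u^{(n)})\partial_l u^{(n+1)})$ and the commutator $-[S_j, \partial_k g^{kl}_{<j-4}(u^{(n)})\partial_l]u^{(n+1)}$, exactly as in the proof of Proposition \ref{p:lin1}. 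Applying Proposition \ref{prop:morawetz} yields
\[
\|S_j u^{(n+1)}\|_{l^2_j X_j} \lesssim \|S_j u_0\|_{L^2} + \|S_j F(u^{(n)}, \nabla u^{(n)})\|_{l^2_j Y_j} + \|G_j^{(n)}\|_{l^2_j Y_j}.
\]

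The crucial observation is that every multilinear estimate already established — the Moser bound \eqref{fe:moser}, the trilinear bounds \eqref{tri}, \eqref{tri2}, \eqref{tri3}, and the commutator bound \eqref{fe:xxycom} — produces on the right-hand side a factor $(a_k + b_k + c_k)$ evaluated at the output frequency $k$. Thus these estimates are precisely frequency-envelope compatible: feeding the inductive hypothesis for $u^{(n)}$ into them (and using the inductive hypothesis at level $n+1$ for the piece of $G_j^{(n)}$ that contains $S_j u^{(n+1)}$), the $l^2_j Y_j$ norms of the forcing and the error terms are controlled by a multiple of $a_j \|u_0\|_{H^s}\cdot\|u^{(n)}\|_{l^2 X^s}^2$, plus a similar multiple of $\|S_j u^{(n+1)}\|_{l^2_j X_j}\cdot \|u^{(n)}\|_{l^2 X^s}^2$. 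By the uniform bound \eqref{unifbdd}, these error contributions carry a small factor $O(\epsilon^2)$, so the second may be absorbed on the left and the first closes the induction with $C$ slightly larger than the implicit constant in Proposition \ref{prop:morawetz}.

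The main obstacle is the bookkeeping in $G_j^{(n)}$, which contains factors of $u^{(n+1)}$ rather than $u^{(n)}$, so one cannot simply appeal to the inductive hypothesis at the previous iterate but must instead bootstrap within the inductive step itself. This bootstrap is made possible by the uniform smallness \eqref{unifbdd}, which turns every coefficient in front of $\|S_j u^{(n+1)}\|_{l^2_j X_j}$ into an $O(\epsilon^2)$ factor. Beyond this, the argument is a frequency-envelope-aware rerun of the linear estimate \eqref{bd:lin1} along the iteration, and the limit step requires only the norm continuity provided by \eqref{Cauchy} on each fixed frequency block.
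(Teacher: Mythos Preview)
Your route through the iteration can be made to work, but the bootstrap you describe has a genuine gap. You claim that the contribution of $G_j^{(n)}$ involving $u^{(n+1)}$ is controlled by $\|S_j u^{(n+1)}\|_{l^2_j X_j}\cdot \|u^{(n)}\|_{l^2 X^s}^2$, which you then absorb on the left for each fixed $j$. This is indeed correct for the commutator $[S_j,\partial_k g^{kl}_{<j-4}(u^{(n)})\partial_l]u^{(n+1)}$, which only sees $u^{(n+1)}$ at frequencies $|k-j|\le 4$. It fails, however, for the high-frequency metric piece $S_j\bigl(\partial_k g^{kl}_{>j-4}(u^{(n)})\,\partial_l u^{(n+1)}\bigr)$: when the metric sits at frequency $m\gg j$, the output lands at frequency $j$ through the high--high to low interaction, which forces $u^{(n+1)}$ to be at frequency $\approx m\gg j$. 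Thus this term draws on $u^{(n+1)}$ at \emph{all} frequencies $\ge j-8$, and cannot be bounded by $\|S_j u^{(n+1)}\|_{l^2_j X_j}$ alone. The trilinear estimate \eqref{tri3} reflects this exactly: its right-hand side carries the factor $a_k$, the admissible envelope of the low-regularity input, not merely its single dyadic piece at $k$. Consequently your per-$j$ absorption does not close.

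The repair is to introduce an admissible envelope $b_j$ for the unknown via \eqref{env} and bootstrap on $b_j$ rather than on $\|S_j u^{(n+1)}\|$; the multilinear bounds then yield $b_j$ on the right with coefficient $O(\epsilon^2)$, and one obtains $b_j\lesssim a_j + \epsilon^2 b_j$, hence $b_j\lesssim a_j$. This is precisely the paper's argument, which moreover runs it directly on the solution $u$ rather than on the iterates: since $u$ already solves \eqref{eqn:quasi1}, the paradifferential right side $f_j$ in \eqref{paradiffEq} involves only $u$, with the single envelope $b_j$, and no limit step is needed. Your iteration-and-limit detour is not wrong in spirit, but once the bootstrap is set up correctly it reduces to the paper's direct argument applied at each stage, so nothing is gained by threading through the $u^{(n)}$.
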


\begin{proof}
We set
\begin{equation}
  \label{bj}
  b_j = 2^{-\delta j} + \|u\|^{-1}_{l^2 X^s} \max_k 2^{-\delta |j-k|}
  \|S_k u\|_{l^2 X^s}
\end{equation}
and note that $\{b_j\}$ is a frequency envelope for $u$ in $l^2 X^s$.
We note that $u_j$ solves
\begin{equation}
  \label{paradiffEq}
 \begin{cases}
    (i\partial_t + \partial_k g^{kl}_{<j-4}\partial_l) u_j = S_j
    F(u,\nabla u) - S_j \partial_k g^{kl}_{>j-4}\partial_l u \\
 \quad \quad \quad \quad \quad \quad \quad \quad \quad \quad - [S_j, \partial_k g^{kl}_{<j-4}\partial_l ] u\\
u_j(0)  =(u_0)_j.
  \end{cases}
\end{equation}
Labelling the right side of the equation above $f_j$, we apply
Proposition \ref{p:lin1} and obtain
\begin{equation}\label{sjuBd}\|S_j u\|_{l^2 X^s} \lesssim a_j \|u_0\|_{H^s} + \|f_j\|_{l^2Y^s}.\end{equation}
We bound $f_j$ in a manner that is akin to the above.  This yields
\begin{equation}
  \label{fjbd}
  \|f_j\|_{l^2 Y^s} \lesssim b_j \|u\|^3_{l^2 X^s} c(\|u\|_{l^2 X^s}).
\end{equation}
Indeed,
provided $s>\frac{d+3}{2}$, we can apply \eqref{fe:moser} ($\sigma =
s-1$) and \eqref{tri} ($\sigma=s$) to bound the first term.  For the
second term we use \eqref{fe:moser} ($\sigma=s$) and \eqref{tri3}.
For the last term in $f_j$, we apply \eqref{fe:xxycom}. 

Applying \eqref{fjbd} in \eqref{sjuBd} we obtain
\[\|S_j u\|_{l^2 X^s} \lesssim a_j \|u_0\|_{H^s} + b_j
\|u\|_{l^2X^s}^3 c(\|u\|_{l^2 X^s}),\]
which yields
\[b_j \lesssim  a_j \|u_0\|_{H^s} \|u\|_{l^2 X^s}^{-1} + b_j
\|u\|_{l^2X^s}^2 c(\|u\|_{l^2 X^s}).\]
Using the smallness of $\|u\|_{l^2 X^s}$, we can bootstrap the second
term in the right.  Moreover, by the definition of $l^2X^s$, we have
$\|u_0\|_{H^s} \lesssim \|u\|_{l^2 X^s}$.  Thus, the above implies
$b_j\lesssim a_j$ as desired.
\end{proof}

We now proceed to show that the map $H^s\to l^2X^s$ given by $u_0
\mapsto u$ is continuous.  Let $\{u^{(n)}_0\}\subset H^s$ be a
sequence which converges to $u_0$ in $H^s$, and let $\{a^{(n)}_j\}$
and $\{a_j\}$ denote their respective frequency envelopes defined via
\eqref{env}.  It follows, thus, that $a^{(n)}_j \to a_j$ in $l^2$.
For any $\varepsilon>0$, there is a $N_\varepsilon$ so that
\[ \|a_{j}^{(n)}\|_{l^2(j>N_{\varepsilon})} \le \varepsilon,\quad
\|a_{j}\|_{l^2(j>N_{\varepsilon})}\le \varepsilon \]
uniformly in $n$.  The preceding proposition then yields
\begin{equation}\label{highFreq}
\begin{split}
\|u^{(n)}_{>N_\varepsilon}\|_{l^2 X^s}&\le \varepsilon
\|u^{(n)}\|_{l^2 X^s} \le C \varepsilon \|u^{(n)}_0\|_{H^s}
\\
\|u_{>N_\varepsilon}\|_{l^2 X^s}&\le \varepsilon
\|u\|_{l^2 X^s} \le C \varepsilon \|u_0\|_{H^s}
\end{split}
\end{equation}
where in the last step we applied \eqref{goalBd}.

To compare $u^{(n)}$, where $u^{(n)}$ is the solution to
\eqref{eqn:quasi1} with datum $u_0^{(n)}$, to $u$, we use \eqref{highFreq} for the high
frequencies and \eqref{weak-lip} for the low frequencies.  Indeed,
\begin{align*}
  \|u^{(n)}-u\|_{l^2X^s}&\lesssim
  \|S_{<N_\varepsilon}(u^{(n)}-u)\|_{l^2 X^s} +
  \|u^{(n)}_{>N_\varepsilon}\|_{l^2X^s} +
  \|u_{>N_{\varepsilon}}\|_{l^2X^s}\\
&\lesssim 2^{N_\varepsilon} \|S_{<N_\varepsilon} (u^{(n)}-u)\|_{l^2
  X^{s-1}} + \varepsilon \|u^{(n)}_0\|_{H^s} + \varepsilon
\|u_0\|_{H^s}\\
&\lesssim 2^{N_\varepsilon}
\|S_{<N_\varepsilon}(u_0^{(n)}-u_0)\|_{H^{s-1}} + \varepsilon
\|u^{(n)}_0\|_{H^s} + \varepsilon \|u_0\|_{H^s}.
\end{align*}
Letting $n\to \infty$ yields
\[\limsup_{n\to \infty} \|u^{(n)}-u\|_{l^2X^s} \lesssim \varepsilon \|u_0\|_{H^s},\]
and subsequently letting $\varepsilon\to 0$ yields the desired result.

We finish by showing the analog of \eqref{goalBd} for higher
frequencies:
\begin{equation}
  \label{goalBdHigh}
  \|u\|_{l^2 X^\sigma}\lesssim \|u_0\|_{H^\sigma},\quad \sigma\ge s
\end{equation}
assuming that $u_0\in H^\sigma$.  Differentiating the original
equation yields
\begin{multline*}
  (i\partial_t + \partial_j g^{jk}(u)\partial_k)(\partial_l u) =
  -(g^{jk})'(u)(\partial_j\partial_l u \partial_k u + \partial_l
  u \partial_j\partial_k u) \\- (g^{jk})''(u)(\partial_j u \partial_l
  u \partial_k u)
+ (\nabla_{z_1} F)(u,\nabla u)\cdot \nabla \partial_l u +
F_{z_0}(u,\nabla u) \partial_l u.
\end{multline*}
For $v_1=\nabla u$, we have
\[(i\partial_t +\partial_j g^{jk}(u)\partial_k)v_1 = G(u,\nabla
u)\nabla v_1
+ F_1(u,\nabla u)\]
where $G(z)=O(|z|^2)$ and $F_1(z)=O(|z|^3)$ near $z=0$.  By
\eqref{fe:moser} and \eqref{tri}, we have
\[\|G(u,\nabla u)\nabla v_1\|_{l^2 Y^s} \lesssim \|u\|^2_{l^2 X^s}
\|v_1\|_{l^2 X^s}, \quad \|F_1(u,\nabla u)\|_{l^2 Y^s}\lesssim \|u\|^3_{l^2X^s}.\]
And by Proposition \ref{p:lin1}, 
\[\|v_1\|_{l^2 X^s} \lesssim \|v_1(0)\|_{H^s} + \|u\|^3_{l^2 X^s},\]
and thus,
\[\|u\|_{l^2 X^{s+1}}\lesssim \|u(0)\|_{H^{s+1}} + \|u\|^3_{l^2
  X^s}.\]

Letting $v_n=\nabla^n u$, we see that $v_n$ solves
\[(i\partial_t + \partial_j g^{jk}(u)\partial_k)v_n = G(u,\nabla u)
\nabla v_n + F_n(u, \nabla u,\dots, \nabla^n u)\]
with $G$ as above.  Arguing inductively, we obtain
\[\|v_n\|_{l^2 X^s} \lesssim \|v_n(0)\|_{H^s} + \|u\|^3_{l^2
  X^{s+n-1}},\]
which gives
\[\|u\|_{l^2 X^{s+n}}\lesssim \|u(0)\|_{H^{s+n}} + \|u\|^3_{l^2 X^{s+n-1}}\]
from which the desired conclusion follows.


\begin{thebibliography}{XX}


\bibitem{BT} I. Bejenaru and D. Tataru: {\em Large data local solutions for the
derivative NLS equation}.  J. Eur. Math. Soc. {\bf 10} (2008), 957--985.





\bibitem{KPV2} C. E. Kenig, G. Ponce, and L. Vega: {\em Small
    solutions to nonlinear Schr\"odinger equations}.
  Ann. Inst. H. Poincar\'e Anal. Non Lin\'eaire {\bf 10} (1993), 255--288.

\bibitem{KPV3} C. E. Kenig, G. Ponce, and L. Vega: {\em Smoothing
    effects and local existence theory for the generalized nonlinear
    Schr\"odinger equations}.  Invent. Math. {\bf 134} (1998), 489--545.

\bibitem{KPV} C. Kenig, G. Ponce, and L. Vega: {\em The Cauchy problem for
quasi-linear Schr\"odinger equations}.  Invent. Math. {\bf 158} (2004), 343-388.

\bibitem{KPRV1} C. Kenig, G. Ponce, C. Rolvung, and L. Vega: {\em The
    general quasilinear ultrahyperbolic Schrödinger equation}.
  Adv. Math. {\bf 196}  (2005), 402-433.

\bibitem{KPRV2} C. Kenig, G. Ponce, C. Rolvung, and L. Vega: {\em Variable
coefficient Schr\"odinger flows for ultrahyperbolic operators}.
Adv. Math. {\bf 206}  (2006), 373-486.


\bibitem{Ich} W. Ichinose: {\em On $L^2$ well-posedness of the Cauchy
    problem for Schr\"odinger type equations on a Riemannian manifold
    and Maslov theory}. Duke Math. J. {\bf 56} (1988), 549-588.

\bibitem{LinPon} F. Linares and G. Ponce: {\em Introduction to
    nonlinear dispersive equations}. {\it Universitext}, Springer, New
  York (2009).

\bibitem{MMT1} J. Marzuola, J. Metcalfe and D. Tataru: {\em Wave packet
parametrices for evolutions governed by pdo's with rough
symbols}. Proc. Amer. Math. Soc. {\bf 136} (2007), 597-604.


\bibitem{MMT3} J. Marzuola, J. Metcalfe, and D. Tataru: {\em
    Quasilinear Schr\"odinger equations I: Small data and quadratic
    interactions}.  Preprint, 2011.  (arXiv: 1106.0490)

\bibitem{Miz1} S. Mizohata: {\em Some remarks on the Cauchy problem}.  J. Math.
Kyoto Univ. {\bf 1} (1961), 109-127.

\bibitem{Miz2} S. Mizohata: {\em Sur quelques equations du type Schr\"odinger}.
Journees Equations aux derivees partielles, Saint-Jean de Monts, 1981.

\bibitem{Miz3} S. Mizohata: {\em On the Cauchy Problem}, {\it Notes and Reports in Mathematics in Science and Engineering}, {\bf 3}, Science Press and Academic Press (1985).






\end{thebibliography}
\end{document}